\documentclass[12pt,a4paper]{scrartcl}

\usepackage[T1]{fontenc} 
\usepackage{textcomp}    

\usepackage{setspace} 
\setstretch{1.10}

\usepackage[leqno]{amsmath}
\usepackage{amsfonts,amssymb,amsthm}

\usepackage[all]{xy}
\SelectTips{cm}{12}   

\usepackage{relsize}  

\newcommand{\Cone}{\mathrm{cone}}

\newcounter{lemma}
\theoremstyle{plain}                        
\newtheorem{localtheorem}[lemma]{Theorem}
\newtheorem{locallemma}[lemma]{Lemma}
\newtheorem{localproposition}[lemma]{Proposition}
\newtheorem{localcorollary}[lemma]{Corollary}
\newtheorem{localconstruction}[lemma]{Construction}

\theoremstyle{definition}
\newtheorem{localremarknumbered}[lemma]{Remark}
\newtheorem*{localremark}{Remark}
\newtheorem*{localdefinition}{Definition}

\newtheorem*{localremarks}{Remarks}

\newcommand{\Coh}{\mathrm{Coh}}
\newcommand{\Db}{\mathrm{D^b}}

\newcommand{\FM}{\mathsf{FM}}

\DeclareMathOperator{\Pic}{Pic}

\DeclareMathOperator{\Hilb}{Hilb}

\newcommand{\Sym}{\mathrm{Sym}}

\DeclareMathOperator{\Ext}{Ext}
\DeclareMathOperator{\Hom}{Hom}

\newcommand{\shExt}{\mathcal{E}xt}

\DeclareMathOperator{\ch}{ch}

\DeclareMathOperator{\rk}{rk}

\newcommand{\dual}{^\vee}
\newcommand{\ddual}{^{\vee\vee}}
\newcommand{\inv}{^{-1}}
\newcommand{\orth}{^\perp}
\newcommand{\rarpa}[1]{\stackrel{#1}{\rightarrow}}
\newcommand{\larpa}[1]{\stackrel{#1}{\longrightarrow}}
\newcommand{\isom}{ \text{{\hspace{0.48em}\raisebox{0.8ex}{${\scriptscriptstyle\sim}$}}}
                    \hspace{-0.65em}{\rightarrow}\hspace{0.3em}} 

\newcommand{\surject}{\rightarrow\hspace{-1.8ex}\rightarrow}

\newcommand{\kimplies}{$\Longrightarrow\,\,$}


\newcommand{\IC}{\mathbb C}

\newcommand{\IN}{\mathbb N}

\newcommand{\IP}{\mathbb P}

\newcommand{\IZ}{\mathbb Z}



\newcommand{\kc}{\mathcal{C}}

\newcommand{\kj}{\mathcal{J}}

\newcommand{\km}{\mathcal{M}}

\newcommand{\ko}{\mathcal{O}}
\newcommand{\kp}{\mathcal{P}}

\newcommand{\kt}{\mathcal{T}}

\renewcommand{\tilde}[1]{\widetilde{#1}}

\newcommand{\coker}{\mathrm{coker}}
\newcommand{\length}{\mathrm{length}}
\newcommand{\im}{\mathrm{im}}
\newcommand{\supp}{\mathrm{supp}}

\newcommand{\pr}{\mathrm{pr}}
\newcommand{\chara}{\mathrm{char}}

\newcommand{\proofpara}[1]{\medskip\noindent\emph{#1}}

\newcommand{\map}[1]{\stackrel{#1}{\longrightarrow}}

\newcommand{\bib}[4]{\bibitem{#1} #2: \emph{#3}, #4.}

\newcounter{abccounter}
\newenvironment{abcliste}{\begin{list}{(\alph{abccounter})}
                      {\usecounter{abccounter}
                       \setlength{\topsep}{0ex}
                       \setlength{\partopsep}{0ex}
                       \setlength{\listparindent}{0ex}
                       \setlength{\itemsep}{0ex}
                       \setlength{\parsep}{0ex}
                       \setlength{\leftmargin}{3em}
                       \setlength{\labelwidth}{2em}
                       \setlength{\parskip}{0ex}
                      }
                      }{\end{list}}

\begin{document}

\begin{center}
\textbf{\Large Postnikov-Stability for Complexes}

\bigskip

Georg Hein\footnote{
Universit\"at Duisburg-Essen, FB Mathematik D-45117 Essen,
\texttt{georg.hein@uni-due.de}}
, David Ploog\footnote{
Freie Universit\"at Berlin, FB Mathematik, Arnimallee 3, 
D-14195 Berlin, \texttt{ploog@math.fu-berlin.de}}
\\
April 19, 2007
\end{center}

\begin{quote}{\small\scshape Abstract}
We present a novel notion of stable objects in the
derived category of coherent sheaves on a smooth projective variety.
As one application we compactify a moduli space of stable bundles 
using genuine complexes.
\end{quote}

\subsection*{Introduction}

Let $X$ be a polarised, smooth projective variety of dimension $n$ over an
algebraically closed field $k$. Our aim is to introduce a stability notion
for complexes, i.e.\ for objects of $\Db(X)$, the bounded derived category
of coherent sheaves on $X$. The main motivation for this notion is Falting's
observation that semistability on curves can be phrased as the existence of
non-trivial orthogonal sheaves \cite{Faltings} (similar results hold for surfaces,
see \cite{hein}). In order to make this idea work, we need convolutions and
Postnikov systems (the former can be seen as a generalisation of total 
complexes, and the latter generalises filtrations to the derived category). 
The details will be spelt out in the next section. As an example
of our theory, we show how a classical non-complete moduli space of certain
bundles can be compactified using complexes (see Section \ref{compactify}).
Also, by construction, our notion of stability is preserved under equivalences
(Fourier-Mukai transforms). Of interest to us is when classical
preservation of stability conditions is a special case of our situation. A
first check is done in Section \ref{conservation}.
In Sections \ref{sheafconditions} and \ref{lemmasection}, we give some general
facts of projective geometry from the derived point of view. In particular,
Lemma \ref{lemma-2}, a generalisation of the Euler sequence, is used several
times.

It seems only fair to point out that the results of this article in all
probability bear no connection with Bridgeland's notion of t-stability on 
triangulated categories (see \cite{bridgeland}). His starting point about
(semi)stability in the classical setting is the Harder-Narashiman filtration
whereas, as mentioned above, we are interested in the possibility to capture 
$\mu$-semistability in terms of Hom's in the derived category. Our approach 
is much closer to, but completely independent of, Inaba (see \cite{inaba}).

On notation: we will denote the $i$-th homology of a complex $a$ 
by $h^i(a)$. Functors are always derived without additional notation; e.g.\
for a proper map $f:X\to Y$ of schemes, we write $f_*:\Db(X)\to\Db(Y)$ for the
triangulated (exact) functor obtained by deriving $f_*:\Coh(X)\to\Coh(Y)$. 
For two objects $a$, $b$ of a $k$-linear triangulated category, we write
$\Hom^i(a,b):=\Hom(a,b[i])$ and $\hom^i(a,b):=\dim_k\Hom^i(a,b)$.
The Hilbert polynomial of a sheaf $E$ is denoted by $p(E)$, so that 
$p(E)(l)=\chi(E(l))$. Finally, by semistability for sheaves, we always mean 
$\mu$-semistability.

\subsection*{P-stability}

Let $\kt$ be a $k$-linear triangulated category for some field $k$;
we usually think of $\kt=\Db(X)$, the bounded derived category of a smooth,
projective variety $X$, defined over an algebraically closed field $k$.
A \emph{Postnikov-datum} or just \emph{P-datum} is a finite collection
$C_d, C_{d-1},\dots, C_{e+1}, C_e\in\kt$ of objects together with
nonnegative integers $N_i^j$ (for $i,j\in\IZ$) of which only a finite number
are nonzero. We will write $(C_\bullet,N)$ for this.

Recall the notions of Postnikov system and convolution (see 
\cite{GM}, \cite{BBD}, \cite{Orlov}, \cite{Kawamata}):
given finitely many objects $A_i$ (suppose $n\geq i\geq 0$) of $\kt$
together with morphisms $d_i:A_{i+1}\to A_{i}$ such that $d^2=0$,
a diagram of the form
\[ \xymatrix@=1em{
  A_n    \ar[rr]^{d_{n-1}} \ar@{=}[ddr] & &
  A_{n-1} \ar[rr]^{d_{n-2}} \ar[ddr]^{i} & &
  A_{n-2} \ar[r]           \ar[ddr]^{i} &
  \cdots\cdots & & \cdots\cdots\ar[r] & 
  A_1 \ar[rr]^{d_{0}}      \ar[ddr]^{i} & &
  A_0                     \ar[ddr] \\ \\
&  T_n    \ar[uur]^{j} & & 
  T_{n-1} \ar[uur]^{j}  \ar[ll]^{[1]} & & 
  T_{n-2}               \ar[ll]^{[1]} &
  \cdots\cdots \ar[l] & 
  T_2    \ar[uur]      \ar[l] & &
  T_{1}   \ar[uur]      \ar[ll]^{[1]} & &   
  T_{0}                 \ar[ll]^{[1]} 
} \]
(where the upper triangles are commutative and the lower ones are distinguished)
is called a \emph{Postnikov system} subordinated to the $A_i$ and $d_i$. The
object $T_0$ is called the \emph{convolution} of the Postnikov system.

\begin{localdefinition}
An object $A\in\kt$ is \emph{P-stable with respect to $(C_\bullet,N)$} if
\begin{abcliste}
\item[(i)]
      $\hom^j_{\kt}(A,C_i)=N_i^j$ for all $i=d,\dots,e$ and all $j$.
\item[(ii)]
      For $i>0$, there are morphisms $d_i:C_i\to C_{i-1}$ such that $d^2=0$ and
      that the complex $(C_{\bullet\ge0},d_\bullet)$ admits a convolution $K$.
\item[(iii)]
      $\Hom_{\kt}^*(A,K)=0$, i.e.\ $K\in A\orth$.
\end{abcliste}
\end{localdefinition}


%

\begin{localremark}
$\text{\phantom{xxx}}$
\begin{abcliste}
\item Convolutions in general do not exist, and if they do, there is no
      uniqueness in general, either. There are restrictions on the
      $\Hom^j(C_a,C_b)$'s which ensure the existence of a (unique) convolution.
      For example, if $\kt=\Db(X)$ and all $C_i$ are sheaves, then the unique
      convolution is just the complex $C_\bullet$ considered as an object of $\Db(X)$.
%
%
%
\item Note that the objects $C_i$ with $i<0$ do not take part in forming the
      Postnikov system. We call the conditions enforced by these objects
      via (i) the \emph{passive} stability conditions. They can be used to
      ensure numerical constraints, like fixing the Hilbert polynomial of
      sheaves.
\item In many situations there will be trivial choices that ensure P-stability.
      This should be considered as a defect of the parameters (like choosing
      non-ample line bundles when defining $\mu$-stability) and not as a defect
      of the definition.
\end{abcliste}
\end{localremark}


\section{Example: Stability on algebraic curves}
In this section, $X$ denotes a smooth projective curve of genus $g$
over $k$.
Let $r > 0$ and $d$ be two integers and fix a line bundle $L_1$ on $X$ 
of degree one.

\subsection{Semistability conditions on curves}
Our starting point is the following result. The vector bundle 
$F_{r,d}$ appearing in statement (iii) of the theorem below is 
universal, i.e.\ it only depends on $r$, $d$, and $L_1$. It is
constructed in Section \ref{lemmasection} on page \pageref{Sm}. 
The specifications for the construction are given in the proof
below.

\begin{localtheorem}\label{cur-stab-con}
For a coherent sheaf $E$ on $X$ of rank $r$ and degree $d$, the
following conditions are equivalent:

\begin{tabular}{lp{13cm}}
(i)   & $E$ is a semistable vector bundle.\\
(ii)  & There is a sheaf $0\ne F\in (E\dual)\orth$, i.e.\
        $H^0(E \otimes F)= H^1(E \otimes F) =0$. \\
(ii') & There exists a vector bundle $F$ on $X$ with
        $\det(F) \cong L_1^{\otimes (r^2(g-1)-rd)}$ and
	$\rk(F)=r^2$ such that
        $H^0(E \otimes F)= H^1(E \otimes F) =0$.\\
(iii) & $\Hom(E,F_{r,d})=0$.\\
\end{tabular}
\end{localtheorem}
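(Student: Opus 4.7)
\proofpara{Plan.}
The plan is to prove the circular chain (ii')$\Rightarrow$(ii)$\Rightarrow$(i)$\Rightarrow$(ii') and then to separately establish (ii')$\Leftrightarrow$(iii) by invoking the construction of $F_{r,d}$ from Section~\ref{lemmasection}. The implication (ii')$\Rightarrow$(ii) is immediate, as the bundle of rank $r^2$ supplied by (ii') is automatically non-zero.

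For (ii)$\Rightarrow$(i), I will first observe that $E$ and $F$ are forced to be locally free: any torsion in either would contribute global sections to $E\otimes F$, contradicting the hypothesis. Riemann--Roch combined with $\chi(E\otimes F)=0$ then yields $\mu(F)=g-1-\mu(E)$. Supposing for contradiction that $E$ has a subsheaf $E'\subset E$ of strictly larger slope, tensoring $0\to E'\to E\to E/E'\to 0$ by the locally free $F$ gives the injection $H^0(E'\otimes F)\hookrightarrow H^0(E\otimes F)=0$. However,
\[ \chi(E'\otimes F)=\rk(E')\,\rk(F)\,(\mu(E')-\mu(E))>0, \]
so $h^0(E'\otimes F)>0$, contradicting the injection and proving semistability.

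The direction (i)$\Rightarrow$(ii') is the heart of the matter and amounts to Faltings' theorem for curves. I will work inside the moduli space $\mathcal{M}$ of vector bundles of rank $r^2$ and determinant $L_1^{r^2(g-1)-rd}$; on $\mathcal{M}$ one has $\chi(E\otimes F)=0$ identically, so the theta-type locus $\Theta_E=\{F: H^0(E\otimes F)\ne 0\}$ coincides with $\{F: H^1(E\otimes F)\ne 0\}$. To show $\Theta_E$ is a proper subvariety, a natural candidate in its complement is $F_0=W^{\oplus r}$, where $W$ is a general stable vector bundle of rank $r$ with $\det W=L_1^{r(g-1)-d}$: then $\mu(W)=g-1-\mu(E)$ forces $\chi(E\otimes W)=0$, and the generic vanishing $H^*(E\otimes W)=0$ follows from Faltings' classical argument via semicontinuity inside the moduli of stable bundles.

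Finally, (ii')$\Leftrightarrow$(iii) will follow from the construction of $F_{r,d}$ in Section~\ref{lemmasection}, which uses the generalised Euler sequence of Lemma~\ref{lemma-2}: the bundle $F_{r,d}$ is engineered so that $\Hom(E,F_{r,d})=0$ precisely captures the existence of an $F$ of the kind required by (ii'). The principal obstacle throughout is the step (i)$\Rightarrow$(ii'); the numerical prescription $\mu(F)=g-1-\mu(E)$ places $E\otimes F$ squarely in the ``critical'' slope range where semistability does not by itself imply cohomology vanishing, so one must genuinely invoke a genericity argument inside the relevant moduli space of bundles.
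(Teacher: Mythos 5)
Your overall architecture differs from the paper's: the paper proves (ii')$\Rightarrow$(ii)$\Rightarrow$(i) (the first trivial, the second quoted as well-known — your explicit slope/Riemann--Roch argument for it is correct and is a nice addition), cites Popa for (i)$\Rightarrow$(ii'), and then closes the circle via (ii')$\Rightarrow$(iii)$\Rightarrow$(i), where (ii')$\Rightarrow$(iii) uses an explicit resolution $0\to A\to B\to F\to 0$ by sums of powers of $L_1$ together with Remark~\ref{cor-2}, and (iii)$\Rightarrow$(i) is a direct slope estimate against a destabilising quotient. The two places where your plan genuinely breaks are (i)$\Rightarrow$(ii') and the claimed standalone equivalence (ii')$\Leftrightarrow$(iii).

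For (i)$\Rightarrow$(ii'): you reduce to finding one stable bundle $W$ of rank $r$ and slope $g-1-\mu(E)$ with $H^*(E\otimes W)=0$, and assert that this ``follows from Faltings' classical argument via semicontinuity.'' Semicontinuity only shows that the locus $\{W : h^0(E\otimes W)>0\}$ is closed in the moduli space; the entire content of the theorem is to exhibit a single $W$ outside it, and your sketch does not do that. Worse, since $H^*(E\otimes W^{\oplus r})=0$ iff $H^*(E\otimes W)=0$, your choice $F_0=W^{\oplus r}$ silently strengthens the statement from ``there is an orthogonal bundle of rank $r^2$'' (Popa's effective bound, which is what the theorem asserts and what the paper cites) to ``there is an orthogonal bundle of rank $r$.'' That stronger statement is not available in general: low-rank complements can fail (Raynaud-type base points of theta maps), which is precisely why the rank in (ii') is $r^2$. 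This step cannot be fixed by genericity alone; you must reproduce or cite Faltings/Popa.

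For (ii')$\Leftrightarrow$(iii): this is only asserted, and the direction (iii)$\Rightarrow$(ii') is not a formal consequence of the construction of $F_{r,d}$. Translating $\Hom(E,F_{r,d})=0$ into the existence of $F$ via Lemma~\ref{lemma-2} and Remark~\ref{cor-2} requires the input $\Hom^{-1}(a,c)=0$, i.e.\ $H^0(E\otimes A)=H^0(E\otimes B)=0$, which the paper deduces from the semistability of $E$. That is why the paper instead closes the loop with the elementary implication (iii)$\Rightarrow$(i): a destabilising quotient $E''$ would satisfy $\mu(E'')<\mu(E)-\frac{1}{r^2}$, forcing $\Hom(E'',F_{r,d})\ne0$ by slope reasons. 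You should adopt that route (or first establish semistability) rather than claiming (iii)$\Rightarrow$(ii') directly.
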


\begin{proof}
(ii') \kimplies (ii) is trivial and (ii) \kimplies (i) is well-known.
The implication (i) \kimplies (ii') was shown in Popa's paper \cite{Popa}.
Thus, it suffices to show (ii') \kimplies (iii) \kimplies (i).

Suppose there exists such a vector bundle $F$ as in (ii'). It follows,
that $F$ is also a semistable vector bundle. 
Putting $e:=g+1+\lceil\frac{d}{r}\rceil$, the bundle
$F\otimes L_1^{\otimes e}$ is then globally generated. Since $X$ is of
dimension one there exists a surjection
 $\xymatrix{
     \ko_X ^{\oplus (r^2+1) } \ar@{->>}[r]^-\pi &
     F \otimes L_1^{\otimes e} }$.
Its kernel is the line bundle
 $\ker(\pi) \cong
  \det\left(F \otimes L_1^{\otimes e}\right)\inv \cong
  L_1^{\otimes e'}$
with $e':=rd-2gr^2-r^2\lceil \frac{d}{r}\rceil$.
Eventually, we obtain a short exact sequence
\[\xymatrix@1{ 0 \ar[r] & A \ar[r]^{\alpha} & B \ar[r] & F \ar[r] & 0 } \]
\[ \mbox{ with }
  A=L_1^{\otimes(rd-2gr^2-(r^2+1)\lceil \frac{d}{r}\rceil -g-1)} 
   \mbox{ and }
  B=\left( L_1^{\otimes (-g-1-\lceil \frac{d}{r}\rceil)} 
     \right)^{\oplus (r^2+1)}.\]
The semistability of $E$ implies that
 $H^0(E \otimes A)=H^0(E \otimes B)=0$.
Thus, the existence of a vector bundle $F$ with the above properties
is equivalent to the existence of a morphism
$\alpha \in \Hom(A,B)$
such that the resulting homomorphism $H^1(E \otimes A) \to
H^1(E \otimes B)$ is injective. Invoking Remark \ref{cor-2},
this is equivalent to (iii). 

Suppose (iii) holds. If $E\surject E''$ were a destabilising quotient,
then we had $\mu(E'') < \mu(E)-\frac{1}{r^2}$. Since
 $\mu(F_{r,d}) > \mu(E'')-(g-1)$ we conclude $\Hom(E'',F_{r,d}) \ne 0$
which contradicts (iii).
\end{proof}

\subsection{A first P-stability datum for algebraic curves}
We consider the derived category $\Db(X)$ of the smooth projective curve
$X$. Let $L$ be a very ample line bundle of degree $D$ on $X$.
As before, we fix two integers $r>0$ and $d$. We assume that
$d>(2g-2+D)r$. We have the

\begin{localproposition}\label{curPdatum1}
For an object $e\in\Db(X)$ the following conditions are equivalent:
\begin{abcliste}
\item[(i)]  $e$ is a semistable sheaf of rank $r$ and degree $d$.
\item[(ii)] $e$ satisfies the following numerical conditions, for all $i\neq0$:
\begin{align*}
  &\hom(\ko_X,e) = d-r(g-1),                & & \hom(\ko_X,e[i]) = 0, \\
  &\hom(L,e) = d-r(g-1-D),                  & & \hom(L,e[i]) = 0, \\
  &\hom(L^{\otimes (r(g-1-D)-d)},e[i]) = 0, & & \hom(e,F_{r,d}) = 0.
\end{align*}
\end{abcliste}
\end{localproposition}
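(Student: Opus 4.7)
The direction (i)$\Rightarrow$(ii) is a direct verification: assuming $e$ is semistable of rank $r$ and degree $d$, the slope bound $d>(2g-2+D)r$ ensures that each of the twists $\ko_X$, $L^\vee$ and $L^{-e'}$ (with $e':=r(g-1-D)-d<0$) applied to $e$ has slope exceeding $2g-2$, so $H^1$ vanishes by the standard vanishing for $\mu$-semistable sheaves; the stated values of $\hom(\ko_X,e)$ and $\hom(L,e)$ are then the Euler characteristics produced by Riemann--Roch. Finally, $\hom(e,F_{r,d})=0$ is Theorem~\ref{cur-stab-con}(iii).

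For (ii)$\Rightarrow$(i), write $h^q:=h^q(e)$. For any line bundle $M$ on $X$, the hypercohomology spectral sequence
\[ E_2^{p,q} = H^p(X, M \otimes h^q) \Longrightarrow H^{p+q}(X, M \otimes e) \]
degenerates at $E_2$ since $\dim X=1$, giving for each $n$ a short exact sequence $0\to H^1(M\otimes h^{n-1})\to H^n(M\otimes e)\to H^0(M\otimes h^n)\to 0$. Taking $M\in\{\ko_X,L^\vee,L^{-e'}\}$ in turn, the vanishing conditions in (ii) translate into $H^0(M\otimes h^q)=0$ for $q\neq 0$ and $H^1(M\otimes h^q)=0$ for $q\neq -1$. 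For $q\notin\{-1,0\}$ we then have $\chi(h^q)=\chi(L^\vee\otimes h^q)=0$, whose difference $-D\cdot\rk(h^q)$ forces $\rk(h^q)=0$, so $h^q$ is torsion, and $H^0(h^q)=0$ then gives $h^q=0$. Hence $e$ is concentrated in cohomological degrees $-1$ and $0$, and $h^{-1}$ is torsion-free (hence locally free) since $H^0(h^{-1})=0$.

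It remains to show $h^{-1}=0$. Comparing the Euler characteristics of $e$ and $L^\vee\otimes e$ and using Riemann--Roch, one finds $\rk(h^0)-\rk(h^{-1})=r$ and $\deg(h^0)-\deg(h^{-1})=d$. An upper bound on $\deg(h^{-1})$ comes from the $L^{-e'}$ condition: the hypothesis $d>(2g-2+D)r$ makes $-e'D>rD(g-1+2D)>0$, and the vanishing $H^0(L^{-e'}\otimes h^{-1})=0$ then forces $\chi(L^{-e'}\otimes h^{-1})\le 0$, i.e.\ $\deg(h^{-1})\le\rk(h^{-1})(g-1+e'D)$. A lower bound on $\deg(h^{-1})$ comes from the non-negativity of $H^0(h^0)=d-r(g-1)-H^1(h^{-1})$ (from the $n=0$ sequence with $M=\ko_X$), which reads $\deg(h^{-1})\ge(r+\rk(h^{-1}))(g-1)-d$. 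Combining, one obtains $r(g-1)-d\le\rk(h^{-1})\cdot e'D$; a direct calculation using $d>(2g-2+D)r$ and $D\ge 1$ shows this inequality fails for every $\rk(h^{-1})\ge 1$, so $\rk(h^{-1})=0$. Therefore $e=h^0$ is a coherent sheaf of rank $r$ and degree $d$, and the condition $\hom(e,F_{r,d})=0$ then yields semistability by Theorem~\ref{cur-stab-con}(iii)$\Rightarrow$(i).

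The main obstacle is the balancing of bounds in the third paragraph: the upper bound from the positivity of $L^{-e'}$ must beat the lower bound from $\hom(\ko_X,e)=d-r(g-1)$, and this is exactly what the hypothesis $d>(2g-2+D)r$ makes possible. The remainder is routine spectral-sequence and Riemann--Roch bookkeeping, together with the appeal to the earlier theorem.
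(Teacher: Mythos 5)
Your proof is correct, and the direction (i)$\Rightarrow$(ii) coincides with the paper's (Riemann--Roch plus the vanishing $H^1(E)=H^1(E\otimes L^{-1})=0$ coming from semistability and $d>(2g-2+D)r$, plus Theorem~\ref{cur-stab-con} for the orthogonality condition). For (ii)$\Rightarrow$(i) you take a genuinely different route. The paper disposes of this direction in one line by citing Lemma~\ref{sc-1}: the five passive conditions are precisely the hypotheses of that lemma (with $\ko_X(1)=L$ and $p(k)=rDk+d-r(g-1)$), so $e$ is a sheaf of rank $r$ and degree $d$, and Theorem~\ref{cur-stab-con} finishes; the proof of Lemma~\ref{sc-1} in turn runs through the Euler-triangle objects $S^m(V,\ko_X,\ko_X(1))$ of Construction~\ref{Sm}, restriction to a general divisor, and the truncation triangle $\tau_{\leq0}a\to a\to\tau_{\geq1}a$. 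You instead prove the sheaf-recognition statement from scratch: degeneration of the hypercohomology spectral sequence on a curve, killing $h^q(e)$ for $q\notin\{-1,0\}$ by comparing $\chi(h^q)$ with $\chi(L^\vee\otimes h^q)$, local freeness of $h^{-1}(e)$, and then a two-sided degree estimate on $h^{-1}(e)$ (upper bound from $H^0(L^{-e'}\otimes h^{-1})=0$ with $e'\ll0$, lower bound from $h^1(h^{-1})\leq h^0(e)$). I checked that the combined inequality $r(g-1)-d\leq\rk(h^{-1})\,e'D$ does fail for $\rk(h^{-1})\geq1$ under $d>(2g-2+D)r$, so the argument closes. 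What you gain is a self-contained, elementary argument avoiding the $S^m$ machinery entirely; what you lose is that the paper's Lemma~\ref{sc-1} is the $n=1$ instance of a method that scales to surfaces (Lemma~\ref{sc-2}) and beyond, which is why the authors factor it out.

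One caveat, which is a defect of the printed statement rather than of your argument: for the bookkeeping $\rk(h^0)-\rk(h^{-1})=r$ to come out as you assert, one needs $\hom(L,e)=\chi(e\otimes L^{-1})=d-r(g-1+D)$, i.e.\ $p(-1)$ for $p(k)=rDk+d-r(g-1)$; the value $d-r(g-1-D)$ appearing in condition (ii) (and fed literally into your $e'$) has the wrong sign on $D$ and would give $\rk(h^0)-\rk(h^{-1})=-r$. The paper's own computation $\chi(E(k))=rDk+d-r(g-1)$ confirms the intended value, so your proof is correct once this sign is fixed consistently throughout.
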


\begin{proof}
If $e$ is a sheaf as in (i), then $\hom(e,F_{r,d})=0$ follows from 
Theorem \ref{cur-stab-con}; for the other equations in (ii), we use
$\chi(E(k))=rDk+d-r(g-1)$ and note the vanishing
$H^1(E)=H^1(E\otimes L\inv)=0$ due to semistability and the assumption
$d>(2g-2+D)r$.

To see that (ii) implies (i), we use again that $\hom(e,F_{r,d})=0$
entails the stability of $e$, as  by Lemma \ref{sc-1} the other five
identies grant in advance that $e$ is a sheaf of rank $r$ and degree $d$.
\end{proof}

\begin{localremark}
The conditions in part (ii) of Proposition \ref{curPdatum1} give a
P-stability datum with stable objects the semistable vector bundles of
rank $r$ and degree $d$. Note that only passive stability conditions
take part. Using Serre duality, the first five conditions easily brought
in the form $\hom^j(e,C_i)=N^j_i$ demanded in the definition.
\end{localremark}

\begin{localremark}
The above condition $d>(2g-2+D)r$
on the degree of our semistable vector bundles is no
restriction. By twisting the vector bundles with a line bundle of
sufficiently high degree this condition is always satisfied.
\end{localremark}

\subsection{Another P-stability datum for algebraic curves}
As before, we consider the derived category $\Db(X)$
of a smooth projective curve $X$.
Fix integers $r$ and $d$.
We consider the two vector bundles
\[A=L_1^{\otimes(rd-2gr^2-(r^2+1)\lceil \frac{d}{r}\rceil -g-1)}
\mbox{ and }
B=\left( L_1^{\otimes (-g-1-\lceil \frac{d}{r}\rceil)} \right)^{\oplus
(r^2+1)} \]
from the proof of Theorem \ref{cur-stab-con}.

\begin{localproposition}\label{curPdatum2}
For an object $e \in \Db(X)$ the following conditions are equivalent:
\begin{abcliste}
\item[(i)]  $e$ is a semistable sheaf of rank $r$ and degree $-d$.
\item[(ii)] There exists a morphism $\xymatrix{A \ar[r]^\psi & B}$ 
            such that $\Hom(e,\Cone(\psi)[i])=0$ for all $i\in\IZ$ and 
            $e$ satisfies the following conditions
\begin{align*}
\hom(e,A[1]) &= (2g+ \lceil\genfrac{}{}{}{1}{d}{r}\rceil - \genfrac{}{}{}{1}{d}{r})(r^3+r), &
\hom(e,A[i]) &= 0  \text{ for } i \ne 1, \\
\hom(e,B[1]) &= (2g+ \lceil\genfrac{}{}{}{1}{d}{r}\rceil- \genfrac{}{}{}{1}{d}{r})(r^3+r), &
\hom(e,B[i]) &= 0 \text{ for } i \ne 1.
\end{align*}
\end{abcliste}
\end{localproposition}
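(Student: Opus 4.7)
The plan is to interpret condition (ii) as the existence of a sheaf $F \in e\orth$ with the prescribed rank and determinant, obtained as the cone of $\psi$, and to reduce the equivalence to Theorem \ref{cur-stab-con} applied to the dual vector bundle $e\dual$.

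\textbf{Forward direction.} If $e$ is semistable of rank $r$ and degree $-d$, then $e\dual$ is semistable of rank $r$ and degree $d$. I would invoke Theorem \ref{cur-stab-con} for $e\dual$ to produce a vector bundle $F$ satisfying condition (ii') of that theorem; crucially, the construction in its proof realises $F$ as the cokernel of a morphism $\psi : A \to B$, so $\Cone(\psi) \cong F$ in $\Db(X)$. On the curve, $\Hom(e, F[i]) = H^i(e\dual \otimes F)$ vanishes for $i = 0, 1$ by (ii') and for all other $i$ trivially. The numerical conditions then follow from Riemann-Roch, once one checks $\Hom(e,A) = \Hom(e,B) = 0$; these vanishings come from semistability combined with the slope estimates $\mu(A\otimes e\dual),\, \mu(B\otimes e\dual) < 0$, a short arithmetic computation using the specific exponents defining $A$ and $B$.

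\textbf{Reverse direction.} First I would apply Lemma \ref{sc-1} to the numerical conditions in (ii) to conclude that $e$ is a sheaf, and then solve the two Euler characteristic equations $\chi(\Hom(e,A)) = -N$ and $\chi(\Hom(e,B)) = -N$ (where $N = (2g+\lceil d/r\rceil-d/r)(r^3+r)$) for the putative rank $r'$ and degree $d'$: this system has the unique solution $r' = r$, $d' = -d$. Next I would rule out $\psi = 0$, for then $\Cone(\psi) \cong B \oplus A[1]$ and the cone vanishing would force $\hom(e,A[1]) = 0$, contradicting the numerical requirement in the non-degenerate cases. Since $A$ is a line bundle and $B$ is torsion-free, any nonzero $\psi$ is injective as a morphism of sheaves, so $F := \Cone(\psi) = \coker(\psi)$ is a genuine sheaf with short exact sequence $0 \to A \to B \to F \to 0$; this pins down $\rk(F) = r^2$ and $\det(F) \cong L_1^{\otimes(r^2(g-1)-rd)}$. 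The assumption $\Hom(e, \Cone(\psi)[i]) = 0$ now reads $F \in e\orth$, which is precisely the hypothesis (ii') of Theorem \ref{cur-stab-con} applied to the sheaf $e\dual$, yielding semistability of $e\dual$ and hence of $e$.

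\textbf{Main obstacle.} The step I expect to be the most delicate is showing that $e$ is genuinely a sheaf at the start of the reverse direction. The cone vanishing alone only supplies an isomorphism $\Hom(e, A[i]) \cong \Hom(e, B[i])$ for all $i$, and the numerical data involve essentially only two distinct line bundles; a priori this seems borderline for ruling out complexes with nontrivial homology in several degrees. Either Lemma \ref{sc-1} must be strong enough in this two-bundle setting, or an additional argument exploiting the extreme (very negative) degrees of $A$ and of each summand of $B$, together with the cone isomorphism, will be required.
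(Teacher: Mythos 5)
Your forward direction follows the paper's proof essentially verbatim and is fine. The reverse direction, however, has a genuine gap at precisely the step you flag as the main obstacle: Lemma \ref{sc-1} cannot be used to show that $e$ is a sheaf. That lemma needs the vanishing of $H^i(a(k))$ and prescribed values of $h^0(a(k))$ at the \emph{consecutive} twists $k=-1,0$ of a line bundle $\ko_X(1)$ admitting a base-point-free pencil (plus one further twist $k=p(-1)$), whereas condition (ii) here only controls $\hom(e,A[i])$ and $\hom(e,B[i])$ for two bundles $A$ and $B$ that are large negative powers of the degree-one bundle $L_1$; these are not consecutive twists of anything base-point free, and no Serre-duality massaging turns the data of (ii) into the hypotheses of Lemma \ref{sc-1}. (Contrast Proposition \ref{curPdatum1}, where the conditions are deliberately placed at $\ko_X$, $L$ and a third twist exactly so that Lemma \ref{sc-1} applies.) So your reverse direction starts from a premise you cannot establish by the route you propose.

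The missing idea, which is the one the paper uses, is that on a smooth curve the sheaf question can be postponed rather than settled first: $\Db(X)$ of a curve is hereditary, so every object splits as $e\cong\bigoplus_i e_i[-i]$ with $e_i=h^i(e)$ coherent sheaves. The orthogonality $\Hom(e,\Cone(\psi)[\ast])=0$ is inherited by each summand, so Theorem \ref{cur-stab-con} makes every nonzero $e_i$ a semistable bundle of slope $-d/r$. But then, exactly as in your forward direction, $e_i\ne 0$ forces $\hom(e_i,A[1])>0$, and this group is a direct summand of $\Hom(e,A[1-i])$, contradicting the required vanishing $\hom(e,A[j])=0$ for $j\ne 1$ whenever $i\ne 0$. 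Hence $e=e_0$ is a sheaf, and your Euler-characteristic computation (or the single Riemann--Roch equation for $A$ combined with the known slope) then pins down rank $r$ and degree $-d$. With this splitting argument substituted for the appeal to Lemma \ref{sc-1}, the remainder of your reverse direction (nontriviality and injectivity of $\psi$, identification of $\Cone(\psi)$ with the cokernel, and the appeal to Theorem \ref{cur-stab-con}) goes through as in the paper.
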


\begin{proof}
If $e \in \Db(X)$ is a semistable vector bundle of rank $r$ and degree
$-d$, then the slope of $e$ is bigger than the slope of the semistable
vector bundles $A$ and $B$. Hence, we have $\Hom(e,A)=\Hom(e,B)=0$, and
we can compute the dimensions of $\Hom(e,A[1])$ and $\Hom(e,B[1])$ using
the Riemann-Roch theorem. The existence of a map $\psi \in \Hom(A,B)$
with the property $\Hom(e,\Cone(\psi))=0$ is a consequence of the proof
of the implication (ii') $\implies$ (iii) in Theorem \ref{cur-stab-con}.
Since both $e$ and $\Cone(\psi)$ are sheaves and we also have 
$\chi(e,\Cone(\psi))=0$, the vanishing $\Hom^*(e,\Cone(\psi))=0$ follows.

Now suppose that $e \in \Db(X)$ fulfills condition (ii). It follows that
$\psi \in \Hom(A,B)$ is not trivial. Since $A$ is a line bundle $\psi$
must be injective. Thus, the cone of $\psi$ is just the cokernel $F$ of
$\psi:A \to B$. Since $X$ is a smooth curve, $e$ is isomorphic to
its cohomology, that is $e= \oplus_{i \in \IZ}e_i[-i]$ with all $e_i$ 
coherent sheaves.
Since $e$ is orthogonal to $F$, all the $e_i$ are orthogonal to $F$ and
are semistable vector bundles of slope $\frac{-d}{r}$ by Theorem
\ref{cur-stab-con}. However, as in the proof of (i) $\implies$
(ii), $e_i \ne 0$ forces $\hom(e_i,A[1])$ to be positive. So we
eventually conclude $e_i =0$ for all $i \ne 0$.
\end{proof}

\begin{localremark}
The condition (ii) in Proposition \ref{curPdatum2}
gives a second P-stability datum
on an algebraic curve. Here we have the additional feature that any
stable object $e$ defines a divisor $\Theta_e$ by
\[ \Theta_e = \left\{ \psi \in \Hom(A,B) \:|\: \Hom(e,\Cone(\psi)) \ne 0
\right\} .\] This divisor is invariant under the standard $k^*$-action
on $\Hom(A,B)$. Thus, we obtain the $\Theta$-divisor
$\Theta_e\subset\IP(\Hom(A,B)\dual)$. A straightforward computation shows that
$\deg(\Theta_e)=(2g+ \lceil\frac{d}{r} \rceil-\frac{d}{r})(r^3+r)$.

The assignment $e \mapsto \Theta_e$ allows an identification of stable 
objects with points in some projective space (namely the linear system of the
$\Theta$-divisors).
P-equivalence of stable objects can be defined by $e \sim_P e'$,
if and only if $\Theta_e = \Theta_{e'}$.
It turns out that in this case P-equivalence classes
coincide with S-equivalence classes.
\end{localremark}

\subsection{Preservation of semistability on an elliptic curve} \label{conservation}

\paragraph{The moduli space of torsion sheaves of length $r$} $ $\\
Now let $X$ be an elliptic curve with a fixed point $P \in X(k)$
and fix a positive integer $r$.
We propose to consider semistable vector bundles of rank
$r$ and degree zero on $X$.

In order to do so, we first consider the following P-stability datum:
an object $t \in \Db(X)$ is P-stable, if and only if there exists a
morphism $\alpha:\ko_X(-3P) \to \ko_X$ with $\hom(t[j],\Cone(\alpha))=0$
for all $j\in\IZ$ and such that for all $i\neq0$ holds
\begin{align*}
 & \hom(\ko_X(-3P),t)    = r, & &\hom(\ko_X,t)    = r, \\
 & \hom(\ko_X(-3P),t[i]) = 0, & &\hom(\ko_X,t[i]) = 0.
\end{align*}
Obviously, we have $t \in \Db(X)$ is P-stable $\Leftrightarrow$ $t$ is a
torsion sheaf of length $r$. The $\Theta$-divisor associated to $t$ is
 $\Theta_t := \{ \alpha\in\IP(H^0(\ko_X(3P))^\vee)
                  \:|\: \Hom(t,\Cone(\alpha))\ne0\}$.
It is a union of $r$ lines, one for every point in the support of $t$ 
(counted with multiplicities).
Two torsion sheaves $t$ and $t'$ are P-equivalent if their
$\Theta$-divisors $\Theta_t$ and $\Theta_{t'}$ coincide.
If the $\Theta$-divisor is reduced, the P-equivalence class contains
only isomorphic objects. However, the maximal number of isomorphism
classes in a P-equivalence class is the number of partitions of $r$.
Grothendieck's Hilbert scheme $\Hilb^r(X)$ (see \cite{Gro}) of length $r$
torsion quotients of $\ko_X$ is the moduli space parametrising the
equivalence classes.

\paragraph{The Fourier-Mukai transform associated to the Poincar\'e bundle} $ $\\
We consider the product $X \times X$ with projections $\pr_1$ and
$\pr_2$.
Let $\Delta\subset X \times X$ be the diagonal, and
 $\kp:= \ko_{X \times X}(\Delta) \otimes \pr_1^*\ko_X(-P)\otimes \pr_2^*\ko_X(-P)$
the Poincar\'e line bundle. We consider the Fourier-Mukai transform
(recall that $\pr_{2*}$ is the derived push-forward)
\[ \FM_\kp:\Db(X) \to \Db(X) \qquad
    t \mapsto \pr_{2*}(\kp \otimes \pr_1^*t). \]
We set $M_1:=\FM_\kp(\ko_X(-3P))$, and $M_0:=\FM_\kp(\ko_X)$.
The complex $M_1$ is a sheaf shifted by $[1]$, with $M_1[-1]$ being 
locally free and $\rk(M_1[-1])=3$ and $\deg(M_1[-1])=1$. The complex $M_0$ 
is a shifted skyscraper sheaf: $M_0[-1]=k(P)$.

\paragraph{A P-stability datum for rank $r$ bundles of degree zero} $ $\\
Let $t \in \Db(X)$ be P-stable with respect to the above P-datum. Then
$e:=\FM_\kp(t)$ is P-stable with respect to the following P-datum:
\begin{align*}
&\hom(M_j,e)=r,\qquad \hom(M_j,e[i])=0 \quad \text{ for } j\in \{0,1\},\ i \ne 0, \\
&\text{and there is } \alpha:M_1 \to M_0 \text{ with } \hom(\Cone(\alpha),e[i]) = 0 ~\forall i.
\end{align*}

We note that for nonzero $\alpha \in \Hom(M_1,M_0)$, the
cone of $\alpha$ is a shifted semistable vector bundle $F$ of rank 3 and
degree zero: $F=\Cone(\alpha)[1]$.
Thus, as in Theorem \ref{cur-stab-con} we have an
orthogonal vector bundle $F$ to any semistable $e$.
This way we obtain a P-stability datum for rank $r$ bundles of degree
zero. Note that P-equivalence corresponds to S-equivalence of semistable
vector bundles (see also Tu's article \cite{Tu}).
This allows a new proof of Atiyah's classification of vector
bundles using the Fourier-Mukai transform $\FM_\kp$. For more details
see \S 14 in Polishuk's book \cite{Pol} and \cite{HP}.


\section{Example: surfaces}

For smooth, projective surfaces, we give a comparison theorem between
$\mu$-semistability and P-stability. A similar result is expected to
hold in any dimension. We assume $\chara(k)=0$ in this section, as we
make use of Bogomolov's restriction theorem.

\begin{localtheorem} \label{surface}
Let $X$ be a smooth projective surface and $H$ a very ample divisor on $X$. 
Given a Hilbert polynomial $p$, there is a P-stability datum $(C_\bullet, N)$
such that for any object $E\in\Db(X)$ the following conditions are equivalent:
\begin{abcliste}
\item[(i)]  $E$ is a $\mu$-semistable vector bundle with respect to $H$
             of Hilbert polynomial $p$
\item[(ii)] $E$ is P-stable with respect to $(C_\bullet, N)$.
\end{abcliste}
\end{localtheorem}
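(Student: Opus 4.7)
My plan is to mirror the curve analysis of Propositions \ref{curPdatum1} and \ref{curPdatum2}, using Bogomolov's restriction theorem to reduce $\mu$-semistability on $X$ to semistability on one carefully chosen general curve in a sufficiently ample linear system.

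\emph{Passive part.} The family $\ks$ of $\mu$-semistable sheaves on $X$ with Hilbert polynomial $p$ is bounded: $r$ and $c_1 \cdot H$ are fixed by $p$, Bogomolov's inequality bounds $c_2$, and Maruyama's boundedness applies. I pick integers $n_1 < n_2 < \cdots$ large enough that $H^i(E(n_j)) = 0$ for all $i > 0$ and every $E \in \ks$, and set $C_{-j} := \ko_X(-n_j H)$ with $N^0_{-j} := p(n_j)$ and $N^i_{-j} := 0$ for $i \ne 0$. By the sheafification lemma alluded to in Section \ref{sheafconditions} (an analogue of Lemma \ref{sc-1}), these passive conditions applied to enough indices $j$ force any $E \in \Db(X)$ satisfying (i) to be a pure coherent sheaf with Hilbert polynomial $p$.

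\emph{Active part.} By Bogomolov's restriction theorem together with its classical converse, and using the boundedness of $\ks$, I fix $k \gg 0$ and a smooth curve $\iota \colon C \hookrightarrow X$ with $C \in |kH|$ such that for \emph{every} $E \in \ks$, the restriction $E|_C$ is a semistable bundle on $C$ if and only if $E$ is $\mu$-semistable on $X$. The uniform choice of $C$ comes from a standard specialisation argument on the universal curve over $\ks \times |kH|$. On $C$, the rank-degree pair $(r, k(c_1 \cdot H))$ is determined by $p$ and $k$, so Theorem \ref{cur-stab-con} supplies a short exact sequence $0 \to A \to B \to F \to 0$ on $C$ with $A$ a line bundle and $B$ a direct sum of line bundles, both of numerical type determined only by $p$ and $k$, such that $E|_C$ is semistable iff some $\psi \in \Hom_C(A,B)$ satisfies $\Hom^*_C(E|_C, \Cone(\psi)) = 0$. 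Choosing $A$ and $B$ as suitable restrictions of $\ko_X(mH)$, I set the active members $C_1 := \iota_* A$ and $C_0 := \iota_* B$; optionally each can be further resolved via $\ko_X((m-k)H) \to \ko_X(mH) \to \iota_* \ko_X(mH)|_C$ to obtain a complex whose entries are line bundles on $X$. The morphism $d_1 \colon C_1 \to C_0$ of clause (ii) is then $\iota_* \psi$, and its cone is the convolution $K := \iota_* F$.

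\emph{Verification.} For $E \in \ks$ the adjunction $\Hom^*_X(E, \iota_* G) = \Hom^*_C(E|_C, G)$ is valid (the derived restriction coincides with the ordinary one because $E$ is torsion-free and we may arrange $C$ to avoid the singular locus of $E$). Hence $\hom^j(E, \iota_* A)$ and $\hom^j(E, \iota_* B)$ are computed by Riemann-Roch on $C$ combined with the vanishings forced by semistability of $E|_C$ and sufficient negativity of $A, B$; the resulting values depend only on $p$ and $k$, and they become the active entries of $N$. The forward direction (i)--(iii) for a $\mu$-semistable $E$ is then immediate. For the converse, (i) with the sheafification lemma yields that $E$ is a sheaf with Hilbert polynomial $p$; (ii)--(iii) and the adjunction give $\Hom^*_C(E|_C, F) = 0$; Theorem \ref{cur-stab-con} then makes $E|_C$ semistable, and the choice of $C$ promotes this to $\mu$-semistability of $E$ itself.

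\emph{Main obstacle.} The principal technical step is the uniform choice of $C$: Bogomolov's restriction theorem and its converse apply only generically and only for each fixed $E$, so promoting the conclusion to a single $C$ that detects (non)semistability for the entire bounded family $\ks$ requires the careful specialisation argument alluded to above. A secondary delicate point is ensuring, via the passive conditions alone, that P-stable objects are torsion-free sheaves (and not merely objects of $\Db(X)$), so that the adjunction $\Hom^*_X(E, \iota_* G) = \Hom^*_C(E|_C, G)$ holds without derived-pullback corrections; if "vector bundle'' is to be read strictly as locally free, additional $\mathcal{E}xt$-vanishing conditions would have to be built into the passive part, but the theorem is natural as stated for pure rank-$r$ sheaves.
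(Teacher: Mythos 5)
Your overall strategy coincides with the paper's: passive conditions to pin down a sheaf with Hilbert polynomial $p$, Bogomolov's restriction theorem to pass to a curve $\tilde H\in|m_2H|$, and Theorem \ref{cur-stab-con} there to produce an active pair $C_1\to C_0$ whose convolution is the orthogonal sheaf $F$ viewed on $X$. But there is a genuine gap in the passive part. Conditions of the form $h^0(a(n_j))=p(n_j)$ and $h^{i}(a(n_j))=0$ for $i\ne 0$ at \emph{large positive} twists $n_j$ are satisfied by every sufficiently regular sheaf with Hilbert polynomial $p$, including ones with torsion or with non-locally-free points. For instance, if $E''$ is a $\mu$-semistable bundle with the same rank and $c_1$ as $E$ but $\chi(E''(n))=p(n)-1$, and $x\notin\tilde H$, then $E''\oplus k(x)$ has Hilbert polynomial $p$, satisfies all your positive-twist conditions, restricts to $\tilde H$ as the semistable bundle $E''|_{\tilde H}$ of the same rank and degree as $E|_{\tilde H}$, and is invisible to every active condition supported on $\tilde H$ --- yet it is not a vector bundle. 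So (ii)$\Rightarrow$(i) fails for your datum, and the extra conditions you defer as optional are in fact indispensable. The paper enforces them with two further passive objects $C_{-4}=\ko_X(1-m_1)$ and $C_{-5}=\ko_X(-m_1)$ encoding $H^0(a(l))=H^1(a(l))=0$ for $l\le m_1\ll 0$: these first kill the torsion (otherwise $h^0((a/T)(l))$ would grow as $l\to-\infty$, which is absurd) and then force $a=a\ddual$, i.e.\ local freeness, since the length of $a\ddual/a$ is read off from the cohomology of $a\ddual$ at those negative twists. A second, smaller inaccuracy: on a surface, line-bundle conditions alone do not force an arbitrary object of $\Db(X)$ to be a sheaf; Theorem \ref{sc} via Lemma \ref{sc-2} additionally requires the Euler-type bundles $S^m(V,\ko_X,\ko_X(1))$ and their twists among the passive objects.

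Conversely, the step you single out as the principal obstacle is not one. The effective form of Bogomolov's restriction theorem used in the paper (\cite[\S 7.3]{hl}) provides a single $m_2$, depending only on the numerical invariants determined by $p$, such that $E|_{\tilde H}$ is semistable for \emph{every} smooth $\tilde H\in|m_2H|$ and every $\mu$-semistable $E$ of those invariants; no specialisation over a universal curve is needed. The converse direction is elementary: a destabilising quotient of $E$ restricts to a destabilising quotient of $E|_{\tilde H}$, so semistability of the restriction already implies $\mu$-semistability of $E$ with respect to $H$.
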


\begin{proof}
Suppose that $E$ is a $\mu$-semistable vector bundle with given Hilbert
polynomial $p$. As semistability implied that $E$ appears in a bounded 
family, there is an integer $m_0$ (depending only on $p$) such that $E(m_0)$ 
is $-2$-regular (in the sense of Mumford, see \cite{Mum-CuOnSf}). In 
particular, we have 
\begin{align*} 
 & H^i(E(m_0+k))=0 \text{ for all } i>0, k\geq -2, \\
 & E(m_0+k) \text{ is globally generated for } k\geq -2.
\end{align*}
Passing to the twist by $\ko_X(m_0)$, we may assume that $E$ itself is 
$-2$-regular. 

\proofpara{Sheaf conditions.}
By Theorem \ref{sc}, there are sheaves $C_{-1}$, $C_{-2}$, $C_{-3}$
and integers $N^j_i:=\hom^j(C_i,E)$ (for $i=-1,-2,-3$) such that any complex
$a\in\Db(X)$ with $\hom^j(C_i,a)=N^j_i$ is actually a $-2$-regular sheaf 
with Hilbert polynomial $p$. Thus, the first part of the P-datum consists
of these three objects $C_{-1}$, $C_{-2}$, $C_{-3}$.

\proofpara{Torsion freeness.}
To avoid torsion in $a$ we will construct sheaves $C_{-4}$ and $C_{-5}$ and 
add conditions of type $\hom^j(C_i,a)=N^j_i$ to the P-datum. We need two 
facts:

\begin{abcliste}
\item[(1)]
For a semistable vector bundle $E$ of given numerical invariants there
is an integer $m_1$ such that $H^0(E(k))=H^1(E(k))=0$ for all $k\leq m_1$.

\item[(2)]
If $a$ is a sheaf on $X$ with $H^0(a(k))=H^0(a(k-1))=H^1(a(k))=H^1(a(k-1))=0$
for some $k$, then $H^0(a(l))=H^1(a(l))=0$ for all $l\leq k$. (This assumes
that $\ko_X(1)=\ko_X(H)$ is very ample and that $H$ is general for $a$, i.e.\
does not contain the associated points of the sheaf $a$.)
\end{abcliste}

\noindent
Proof of (1): Along with $E$, the bundle $E\dual\otimes\omega_X$ is semistable
with certain prescribed numerics. Hence there exists $m'_1$ with 
$H^*(E\dual\otimes\omega_X(k'))=0$ for $k'\geq m'_1$. Then the statement follows
from Serre duality.

\noindent
Proof of (2): We use the hyperplane section sequence 
$0\to\ko_X(-H)\to\ko_X\to\ko_H\to0$. Tensorising this with $a$ and using 
dimensional induction shows the claim.

\noindent
Using the constant $m_1$ from (1) above, we require 
\begin{align*}
& h^i(a(m_1-j))   = h^i(E(m_1-j)) = 0,   & & j=0,1 \text{ and } i=0,1 \\
& h^2(a(m_1-j))   = p(m_1-j),            & & j=0,1.
\end{align*}
The second fact then implies $H^i(a(l))=0$ for all $l\leq m_1$ and $i=0,1$.
This in turn forces $a$ to be torsion free. If not, consider the torsion
exact sequence $0\to T\to a\to a/T\to 0$. Then $h^0(a(l))=0$ implies 
$h^0(T(l))=0$, hence $T$ is purely 1-dimensional. Next we have
$H^1(T(l))\cong H^0(a/T(l))$ for all $l\leq m_1$. Together with
$H^0(T(l))=0$, this shows that the polynomial $h^0(a/T(-l))$ is eventually
monotonously increasing. This is absurd for a coherent sheaf on a
projective variety --- contradiction.

\proofpara{Local freeness.} The sheaf $a$ is now automatically a vector
bundle. If not, we can consider the short exact sequence 
$0\to a\to a\ddual\to Q\to0$. As we know $a$ to be torsion free, $a\ddual$
is locally free and $Q$ a sheaf of dimension 0. Hence $h^0(Q(l))=\length(Q)$
for all $l$. But then $h^1(a\ddual(l))=\length(Q)$ for all $l\leq m_1$,
hence $\length(Q)=0$ by construction. 

Thus setting $C_{-4}:=\ko_X(1-m_1)$ and $C_{-5}:=\ko_X(-m_1)$ we can force
our objects to be vector bundles with the given numeric invariants.

\proofpara{Semistability.}
Using Bogomolov's restriction theorem (see \cite[\S7.3]{hl}), there is
a constant $m_2$ (again depending only on the numerics of $E$) such
that for all smooth curves $\tilde H\in|m_2H|$ the restriction
$E|_{\tilde H}$ is semistable. By the results about stability on 
curves as in Theorem \ref{cur-stab-con}, there is a bundle 
$F\in\Coh(\tilde H)$ with $\Hom(E|_{\tilde H},F)=\Ext^1(E|_{\tilde H},F)=0$,
i.e.\ $F\in(E|_{\tilde H})\orth$ in $\Db(\tilde H)$. By symmetry, $F$ is
also semistable on $\tilde H$. Hence there is a short exact sequence
of sheaves on $X$
\[ \xymatrix@1{
 0 \ar[r] & M \ar[r]^-\alpha & \ko_{\tilde H}^{\oplus r^2+1}(-m_3) \ar[r] 
          & F \ar[r] & 0 } \]
i.e.\ $F=\Cone(\alpha)$ is a torsion sheaf on $X$. 

On the other hand, if $F\in(E|_{\tilde H})\orth$ in $\Db(X)$, then 
$E|_{\tilde H}$ is semistable, hence $E$ is semistable with respect to
the polarisation $\tilde H$. Thus, once we have a map 
$\alpha\in\Hom(L,\ko_{\tilde H}^{\oplus r^2+1}(-m_3))$ with
$a\in\Cone(\alpha)\orth$ the sheaf $a$ will be semistable.
By setting $C_1:=M$ and $C_0:=\ko_{\tilde H}^{\oplus r^2+1}(-m_3)$
we thus complete our P-datum $(C_{-5},\dots,C_1,N)$.
\end{proof}

\begin{localproposition}
For every $h_1 \in H^2(X,\IZ)$ and each natural number $n\in\IN$ there
exists a Postnikov-datum $(C_i,N_i^j)$ such that for any
$a \in \Db(X)$ we have the equivalence
\[ \left( \begin{array}{c} \hom^j(C_i,a)=N^j_i \\
   \text{for all } j\in\IZ \text{ and all }i \end{array}\right)
 \iff
   \left( \begin{array}{c} a \cong A \in \Coh(X) \mbox{ with }
   A \cong L\otimes \kj_Z \\
   \mbox{where } L \mbox{ is a line bundle with } c_1(L)=h_1\\
   \mbox{and } \kj_Z \mbox{ is an ideal sheaf of colength } n.
\end{array} \right)
\]
\end{localproposition}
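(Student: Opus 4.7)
The plan is to adapt the template of Theorem~\ref{surface} to the rank-one setting, using only passive conditions and adding auxiliary line-bundle twists to pin down $c_1$.

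By Hirzebruch--Riemann--Roch the Hilbert polynomial $p(m)=\chi((L\otimes\kj_Z)(m))$ depends only on $h_1$, $n$ and the invariants of $X$. The family of such sheaves is parametrised by the bounded scheme $\Pic^{h_1}(X)\times\Hilb^n(X)$, so a common twist $(L\otimes\kj_Z)(m_0)$ is $(-2)$-regular uniformly; after absorbing $m_0$ into $h_1$ we may assume the whole family is already $(-2)$-regular. Now invoke Theorem~\ref{sc} to obtain $C_{-1},C_{-2},C_{-3}$ whose numerics $N^j_i:=\hom^j(C_i,L\otimes\kj_Z)$ (constant across the family) force any $a$ satisfying $\hom^j(C_i,a)=N^j_i$ to be a coherent sheaf with Hilbert polynomial~$p$; in particular $\rk(a)=1$. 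Adjoining $C_{-4}:=\ko_X(1-m_1)$ and $C_{-5}:=\ko_X(-m_1)$ with the appropriate numerics forces torsion-freeness exactly as in the proof of Theorem~\ref{surface}, so $a\cong L'\otimes\kj_{Z'}$ for some line bundle $L'$ and $0$-dimensional subscheme $Z'$.

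What remains is to force $c_1(L')=h_1$ and $\length(Z')=n$ separately. Riemann--Roch applied to $\chi(a(m))$ determines only $c_1(L')\cdot H$ and the combination $\frac{1}{2}(c_1(L')^2-c_1(L')\cdot K_X)-\length(Z')$. To split these invariants, choose line bundles $B_1,\dots,B_{\rho-1}$ whose Chern classes together with $[H]$ span $\NS(X)_\IQ$, pick integers $k_j\gg 0$ so that $H^{>0}(A\otimes B_j(k_jH))=0$ for every $A$ in the bounded family (uniform Serre vanishing), and add passive objects $B_j\dual(-k_jH)$ with the corresponding $h^i$-values as numerics. The Euler-characteristic part of each added condition reads $c_1(a)\cdot c_1(B_j)=h_1\cdot c_1(B_j)$, and together with $c_1(a)\cdot H$ this pins down $c_1(a)=h_1$ in $\NS(X)_\IQ$; the Hilbert polynomial then forces $\length(Z')=n$.

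The main obstacle is this last step: the auxiliary bundles $B_j$ must be chosen so that their Chern classes together with $[H]$ span $\NS(X)_\IQ$, while the twists $k_j$ must be large enough to make the prescribed $\hom^i$-values uniform across the whole bounded family. Both points rest on the boundedness of $\Pic^{h_1}(X)\times\Hilb^n(X)$ together with Serre vanishing. The reverse implication of the equivalence holds by construction of the $N^j_i$, and the forward implication follows from the successive forcings of being a coherent sheaf, torsion-freeness, rank one, and the prescribed Chern invariants.
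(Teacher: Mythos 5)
There is a genuine gap at your torsion-freeness step. You claim that adjoining $C_{-4}=\ko_X(1-m_1)$ and $C_{-5}=\ko_X(-m_1)$ ``with the appropriate numerics forces torsion-freeness exactly as in the proof of Theorem \ref{surface}'', but that argument does not transfer to the present situation. The argument in Theorem \ref{surface} rests on fact (1) there: for the sheaves being characterised one has $H^0(E(k))=H^1(E(k))=0$ for all $k\le m_1$, and the prescribed vanishing of $h^0$ \emph{and} $h^1$ at two adjacent twists propagates (via the hyperplane-section induction of fact (2)) to all lower twists, after which the growth of $h^0(a/T(-l))\cong h^1(T(-l))$ gives the contradiction. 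For $A=L\otimes\kj_Z$ with $n=\length(Z)>0$ this input fails: the sequence $0\to L(k)\otimes\kj_Z\to L(k)\to\ko_Z\to 0$ gives $H^1(A(k))\cong H^0(\ko_Z)=k^n\neq 0$ for $k\ll 0$. So you cannot impose $h^1(a(m_1-j))=0$ (the sheaves you want to keep would then violate the P-datum, killing the reverse implication), and with the correct value $h^1=n$ neither the downward induction nor the isomorphism $H^0(a/T(l))\cong H^1(T(l))$ survives, so no contradiction is reached. The paper's proof handles exactly this point with a different device: it prescribes $h^0(A(m-n-1))=h^0(A(m))=0$ and $h^1(A(m-n-1))=h^1(A(m))=n$, restricts to a smooth divisor $\tilde H\in|(n+1)H|$, notes that a nonzero purely one-dimensional torsion subsheaf $T$ satisfies $\length(T(m)\otimes\ko_{\tilde H})=(n+1)\,c_1(T).H\ge n+1$, and plays this off against the bound $h^0(A(m)\otimes\ko_{\tilde H})\le n$ coming from $0=H^0(A(m))\to H^0(A(m)\otimes\ko_{\tilde H})\to H^1(A(m-n-1))\cong k^n$. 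The choice of the multiple $n+1$ is the key idea your proposal is missing.

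Your final step --- adding passive objects $B_j\dual(-k_jH)$ to pin down $c_1(a)$ beyond the single intersection number $c_1(a).H$ that the Hilbert polynomial records --- is a sensible supplement (the paper is in fact silent on this point and only asserts ``the same numerical invariants''), though note it can only fix $c_1$ up to numerical equivalence, since all the conditions it imposes are Euler characteristics. That is a refinement issue; the substantive defect is the torsion-freeness argument above.
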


\begin{proof}
This follows the line of the proof of Theorem \ref{surface}.
Thus, we may assume the P-datum forces $a$ to be isomorphic to a
sheaf $A$ with the same
numerical invariants as $L \otimes \kj_Z$. We want to add conditions
to our P-datum which imply that $A$ is torsion free of the stated type.
We take an integer $m$ such
that for all $k \geq m$ and all line bundles $L$ with $c_1(L)=h_1$ we have
$h^0(L(k))=0=h^1(L(k))$. If we have $h^0(A(m-n-1))=0=h^0(A(m))$ and
$h^1(A(m-n-1))=n=h^1(A(m))$, then it immediately follows that the
torsion subsheaf $T \subset A$ is purely one dimensional. Let $\tilde H$
be a smooth divisor in $|(n+1)H|$ such that
\[ 0 \to T \to A \to A/T \to 0 \] remains exact when restricted to
$\tilde H$. The sheaf $T(m) \otimes \ko_{\tilde H}$ is of finite
length $l = c_1(T).\tilde H = (n+1) c_1(T).H$. Consequently, we have 
$T \ne 0$ implies that $l > n$. From the long exact cohomology sequence
\[ 0 = H^0(A(m)) \to H^0(A(m) \otimes \ko_{\tilde H}) 
                  \to H^1(A(m-n-1)) \cong k^n \]
we deduce $h^0(A(m) \otimes \ko_{\tilde H}) \leq n$.
Since $H^0(T(m) \otimes \ko_{\tilde H}) \subset H^0(A(m) \otimes
\ko_{\tilde H})$ we obtain $T=0$. Thus, adding the above conditions
forces $A$ to be torsion free.
\end{proof}


\section{Derived compactification of moduli spaces} \label{compactify}
Let $X=\IP^1 \times C$ be the product of $\IP^1$
with an elliptic curve $C$ with morphisms
\[\xymatrix{ \IP^1 & X \ar[r]^-q \ar[l]_-p & C\,.}\]
We denote a fiber of $p$ and $q$ by $f_p$ and $f_q$ respectively.
Denoting the class of a point by $z$ we have generators for the even
cohomology classes
\[\begin{array}{rclcl}
 H^0(X,\IZ) &=&\IZ  &=& \IZ\langle [X]\rangle \\
 H^2(X,\IZ) &=&\IZ^2&=& \IZ\langle f_q,f_p\rangle \\
 H^4(X,\IZ) &=&\IZ  &=& \IZ\langle z\rangle \,. 
\end{array} \]
We take the polarization $H=f_q+3f_p$ on $X$.

\paragraph{The moduli space $\km_1$.}
Let $E$ be a coherent sheaf of Chern character $\ch(E)=1+2f_q-2z$.
If $E$ is torsion free, then we have an isomorphism
$E \cong q^*L \otimes \kj_Z$ where $L$ is a line bundle of degree 2 on
$C$, and $Z$ is a closed subscheme of length 2.
We obtain by straightforward computations
\[  \Hom(E,E) = \IC, \qquad \Ext^1(E,E)=\IC^5, \qquad
    \Ext^2(E,E)=0,   \qquad \chi(E(k))= k^2+7k \,.\]
Thus, the moduli space $\km_1$ of torsion free coherent sheaves of Chern
character $\ch(E)=1+2f_q-2z$ is a smooth projective variety of dimension
5. Indeed, we have
\[ \Pic^2(C) \times \Hilb^2(X) \isom \km_1, \quad
        (L,Z) \mapsto q^*L\otimes\kj_Z . \]

\paragraph{The relative Fourier-Mukai transform.}
Choosing a base point $c \in C$ we can identify $C$ with its Picard
scheme $\Pic^0(C)$ and, as in Section \ref{conservation} obtain a
Poincare line bundle
$\kp$ on $C \times C$
subject to the conditions $\kp|_{\{c\} \times C} \cong \ko_C$ and
$\kp|_{C \times \{c\}} \cong \ko_C$.
From the diagram
$\xymatrix@1{C & C \times C \ar[l]_-{\pi_1} \ar[r]^-{\pi_2} &C}$
\mbox{ and the Fourier-Mukai transform }
 $\FM_\kp:\Db(C)\isom\Db(C)$ with
 $\FM_\kp(a) = \pi_{2*}(\kp \otimes \pi_1^*a)$
we obtain the diagram
 $\xymatrix@1{
   X & \IP^1 \times C \times C \ar[l]_-{\pi_{12}} \ar[r]^-{\pi_{13}}
  &X }$,
the line bundle $\kp_X=\pi_{23}^*\kp$ on $\IP^1 \times C \times C$,
and the Fourier-Mukai transform $\FM_{\kp_X}:\Db(X)\isom\Db(X)$ which is
defined by
$\FM_{\kp_X}(a):= \pi_{13*}(\kp_X \otimes \pi_{12}	^*a)$.

Next we study $\FM_{\kp_X}$ on objects parametrised by our moduli space
$\km_1$.
We remark that any object $E$ parametrised by $\km_1$ is the kernel of a
surjection $q^*L \to \ko_Z$ where $L$ is a degree two line bundle on $C$
and $Z$ is a length two subscheme of $X$.
The Fourier-Mukai transform $\FM_\kp(L)$ is a stable vector bundle of
rank two and degree $-1$ on the curve $C$ (see \cite[Chapter 14]{Pol}).
Thus, $E_L:=\FM_{\kp_X}(q^*L)=q^*\FM_\kp(L)$ is the pullback of a vector
bundle from $C$. 
Suppose a line bundle $M \cong \ko_X(n_pf_p + n_qf_q)$ is contained in $E_L$.
Since $E_L$ is trivial on the fibers of $q$ we find $n_p\leq0$.
The stability of $\FM_\kp(L)$ yields $n_q\leq -1$.
Thus, we have $c_1(M).H \leq -3$.

The Fourier-Mukai transform $\FM_{\kp_X}(\ko_Z)$ of the torsion sheaf
$\ko_Z$ is a sheaf with graded object $T_1 \oplus T_2$ where the sheaves
$T_i$ are line bundles of degree zero on fibers of $p$ extended to $X$.
We distinguish two cases:

Case 1: $\ko_Z$ is not contained in a fiber of $p$.
Since any morphism of $\FM_\kp(L)$ to a line bundle of degree zero is
surjective, applying $\FM_{\kp_X}$ to the short exact sequence 
\[ 0 \to E \to q^*L \to \ko_Z \to 0 \]
remains a short exact sequence of sheaves. Thus $\FM_{\kp_X}(E)$ is the
kernel of the surjective sheaf morphism $E_L \to \FM_{\kp_X}(\ko_Z)$.
Therefore $\FM_{\kp_X}(E)$ is a rank two vector bundle with
$c_1(\FM_{\kp_X}(E))=-f_q-2f_p$, and $c_2(\FM_{\kp_X}(E))=2z$.
Consequently, $c_1(\FM_{\kp_X}(E)).H =
-5$. Since any line bundle $M$ which is contained in $\FM_{\kp_X}(E)$ is
contained in $E_L$ this yields together with
\[ \frac{c_1(\FM_{\kp_X}(E)).H}{2} = \frac{-5}{2} > -3 \geq c_1(M).H \]
the stability of $\FM_{\kp_X}(E)$.

Case 2: $\ko_Z$ is contained in a fiber $p\inv(x)$. In this case the
morphism $E_L \to \FM_{\kp_X}(\ko_Z)$ cannot be surjective. Its
cokernel is a sheaf of length one concentrated on a point of $p\inv(x)$.
The kernel is a rank two vector bundle
$h^0(\FM_{\kp_X}(E))$ with numerical invariants
$c_1(h^0(\FM_{\kp_X}(E)))=-f_q-2f_p$, and $c_2(h^0(\FM_{\kp_X}(E)))=z$.

\paragraph{The moduli space $\km_2$.}
The second moduli space on $X$ we want to consider is the moduli space
$\km_2:=\km_{X,H}(2,-f_q-2f_p,3)$
of stable vector bundles $F$ on $X$ with
\[ \rk(F)=2, \qquad c_1(F)=-f_q-2f_p, \qquad c_2(F)=2z \,.\]
Suppose now that $[F] \in \km_2$. We consider the Fourier-Mukai transform
$\FM_{\kp_X}(F)$.
By construction, the cohomology of $\FM_{\kp_X}(F)$ lives only in degrees
zero and one.
\begin{locallemma}\label{exa-sheaf}
The Fourier-Mukai transform $\FM_{\kp_X}(F)$ has only first cohomology.
This means $\FM_{\kp_X}(F)=h^1(\FM_{\kp_X}(F))[-1]$.
\end{locallemma}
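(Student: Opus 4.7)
The plan is to reduce to a fibre-wise analysis over $\IP^1$ and then use $\mu_H$-stability of $F$ together with Bogomolov's inequality to control each fibre. Since $\kp_X = \pi_{23}^*\kp$ is pulled back from $C\times C$, the relative Fourier-Mukai transform is compatible with derived base change along $p\colon X\to\IP^1$: for each $x\in\IP^1$ with inclusion $j_x\colon\{x\}\times C\embed X$, one has $j_x^*\FM_{\kp_X}(F) = \FM_\kp(V_x)$, where $V_x := F|_{\{x\}\times C}$ is a rank-$2$, degree-$(-1)$ vector bundle on the elliptic curve $C$.

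The main step is the following estimate: \emph{for every $x\in\IP^1$, the bundle $V_x$ has no sub-line-bundle of degree $\geq 1$}. Suppose on the contrary that $M\embed V_{x_0}$ is a sub-line-bundle of degree $d\geq 1$, and set $F' := \ker\bigl(F\surject i_*(V_{x_0}/M)\bigr)$ with $i\colon\{x_0\}\times C\embed X$. The rank-$0$ quotient $i_*(V_{x_0}/M)$ has $c_1 = f_p$ and $c_2 = (1+d)z$, and multiplicativity of Chern classes gives
\[ c_1(F') = -f_q - 3f_p, \qquad c_2(F') = (2-d)z. \]
Every rank-$1$ subsheaf of $F'$ is also a subsheaf of $F$, so $\mu_H$-stability of $F$ forces its slope to be at most $-3 = \mu_H(F')$; hence $F'$ is torsion-free and $\mu_H$-semistable. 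Bogomolov's inequality then yields $\Delta(F') = (2-4d)z \geq 0$, i.e.\ $d\leq 0$, a contradiction. This Bogomolov step is the main obstacle; it is where the specific Chern numbers of $\km_2$ enter decisively.

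Granted the estimate, a case distinction shows that $\FM_\kp(V_x)$ is concentrated in degree $1$ for every $x$: either $V_x$ is semistable, in which case its elliptic Fourier-Mukai transform is a shifted degree-$2$ line bundle on $C$; or the maximal sub-line-bundle $L\subset V_x$ has degree $0$ with quotient $L'$ of degree $-1$, in which case both $\FM_\kp(L)$ (a shifted skyscraper) and $\FM_\kp(L')$ (a shifted degree-$1$ line bundle) lie in degree $1$, and the triangle $\FM_\kp(L)\to \FM_\kp(V_x)\to \FM_\kp(L')$ forces the same for $\FM_\kp(V_x)$. Consequently the derived pullback $j_x^*\FM_{\kp_X}(F)$ is concentrated in degree $1$ for every $x$. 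The hypercohomology spectral sequence
\[ E_2^{p,q} = \Tor_{-p}\!\bigl(h^q\FM_{\kp_X}(F),\,\ko_{\{x\}\times C}\bigr) \;\Longrightarrow\; h^{p+q}\!\bigl(j_x^*\FM_{\kp_X}(F)\bigr) \]
has only two contributions in total degree $0$, namely $j_x^* h^0\FM_{\kp_X}(F)$ and $\Tor_1\!\bigl(h^1\FM_{\kp_X}(F),\,\ko_{\{x\}\times C}\bigr)$; vanishing of the abutment forces both to vanish, and in particular $j_x^* h^0\FM_{\kp_X}(F) = 0$ for every $x\in\IP^1$. Since the fibres $\{x\}\times C$ cover $X$, this forces $h^0\FM_{\kp_X}(F) = 0$, completing the argument.
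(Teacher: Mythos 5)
Your proof is correct, but it takes a genuinely different route from the paper's. The paper argues through the inverse transform: if $h^0(\FM_{\kp_X}(F))\neq0$ then $\Hom(\ko_X(-mH),\FM_{\kp_X}(F))\neq0$ for $m\gg0$, and applying $\FM_{\kp_X}$ again (using $\FM_{\kp_X}\FM_{\kp_X}=\iota^*[-1]$) produces a nonzero map $\psi:\iota^*h^1(\FM_\kp(\ko_X(-mH)))\to F$; since the source restricts to a stable bundle of rank $m$ and degree one on every fibre of $p$, the image of $\psi$ has rank one, its saturation $L=\ko_X(n_pf_p+n_qf_q)$ has $n_q\geq1$ and, by stability of $F$, $n_p\leq-3-3n_q$, and the contradiction is that the length $2+2n_q+n_p+2n_pn_q$ of the zero-dimensional scheme in $0\to L\to F\to\det(F)\otimes L\inv\otimes\kj_Z\to0$ would then be negative. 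You instead work fibrewise over $\IP^1$: derived base change reduces the statement to showing that each $V_x=F|_{\{x\}\times C}$ has Fourier--Mukai transform concentrated in degree one, and your key estimate (no sub-line-bundle of $V_x$ of degree $\geq1$) comes from an elementary modification of $F$ along the fibre followed by Bogomolov's inequality; your Chern-class and slope computations check out, and the concluding Tor spectral sequence is handled correctly (only $\Tor_0$ and $\Tor_1$ occur because the fibre is a divisor, and the fibres cover $X$). What the paper's argument buys is economy: it needs no Bogomolov for this lemma, only a $c_2$-positivity count on $F$ itself, hence no appeal to characteristic zero at this step and no base-change bookkeeping. What yours buys is finer, reusable information: you pin down the possible Harder--Narasimhan types of every restriction $V_x$ (stable, or destabilised by exactly a degree-zero line bundle), which is the natural relative/spectral picture for $\FM_{\kp_X}$ and would also feed into the subsequent torsion-freeness lemma. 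One cosmetic remark: you use the convention $\Delta=4c_2-c_1^2$ where the paper takes $c_1^2-4c_2$, but your inequality $2-4d\geq0$ is the correct form of Bogomolov's inequality for your convention.
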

\begin{proof}
Suppose that $\FM_{\kp_X}(F)$ has cohomology in degree zero,
that is $h^0(\FM_{\kp_X}(F)) \ne 0$. This implies that
$\Hom_{\Db(X)}(\ko_X(-mH),\FM_{\kp_X}(F)) \ne 0$ for $m \gg 0$.
Therefore, we obtain
\[\Hom_{\Db(X)}(\FM_{\kp_X}(\ko_X(-mH)),\FM_{\kp_X}\FM_{\kp_X}(F)) \ne 0 \,. \]
We write $\iota:X\isom X$ for the involution coming from the inversion
in the group law of $C$. Since $\FM_{\kp_X}\FM_{\kp_X}(F) = \iota^*F[-1]$,
and the cohomology of
$\FM_{\kp_X}(\ko_X(-mH))$ is exclusively in degree one, we obtain a nontrivial
homomorphism $\psi:\iota^*h^1(\FM_\kp(\ko_X(-mH))) \to F$.
The restriction of $\iota^*h^1(\FM_\kp(\ko_X(-mH)))$ to any fiber of $p$ is
a stable vector bundle of rank $m$ and degree one. Thus the restriction
of $\psi$ to any fiber is not surjective. Therefore, the image of $\psi$
is of rank one. Let $L$ be the saturation of $\im(\psi)$ in $F$. We have
$L=\ko_X(n_pf_p+n_qf_q)$. Since $L$ contains the image of $\psi$ we have 
$n_q\geq1$. The stability of $F$ yields $n_p\leq -3-3n_q$. We have a short
exact sequence
\[0 \to L \to F \to \det(F) \otimes L\inv \otimes \kj_Z \to 0 \]
where $\kj_Z$ is the ideal sheaf of a subscheme $Z$ of finite length
\[ \length(Z) = c_2(F) - c_1(L).c_1(\det(F) \otimes L\inv) 
              = 2 + 2n_q+n_p+2n_p n_q \,. \]
However, the inequalities for $n_p$ and $n_q$ force $\length(Z)$ to be negative
which is impossible.
\end{proof}

\begin{locallemma}
The sheaf $h^1(\FM_{\kp_X}(F))$ is torsion free.
\end{locallemma}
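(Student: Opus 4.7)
My approach is to assume $T\subseteq G:=h^1(\FM_{\kp_X}(F))$ is a nonzero maximal torsion subsheaf (so $G/T$ is torsion-free) and derive a contradiction using the Fourier--Mukai equivalence. Since $\FM_{\kp_X}(F)=G[-1]$ by Lemma~\ref{exa-sheaf} and $\FM_{\kp_X}^{2}=\iota^{*}[-1]$, applying $\FM_{\kp_X}$ to both sides yields $\FM_{\kp_X}(G)=\iota^{*}F$, a vector bundle concentrated in degree zero. Applying $\FM_{\kp_X}$ to $0\to T\to G\to G/T\to 0$ and using that this functor has cohomological amplitude $[0,1]$ on sheaves (the fibers of $\pi_{13}$ are one-dimensional curves), the resulting long exact sequence reads
\[ 0\to h^{0}\FM_{\kp_X}(T)\to \iota^{*}F\to h^{0}\FM_{\kp_X}(G/T)\to h^{1}\FM_{\kp_X}(T)\to h^{1}\FM_{\kp_X}(G/T)\to 0. \]
The decisive observation is that $h^{0}\FM_{\kp_X}(T)$ embeds into the vector bundle $\iota^{*}F$ and is therefore torsion-free. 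I now split into cases on $\supp T$.

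If $T$ had a nonzero zero-dimensional subsheaf $T_{0}$, a direct computation shows that $\FM_{\kp_X}(k(y,x))$ is a degree-zero line bundle on the fiber $C_{y}=\{y\}\times C$, so $\FM_{\kp_X}(T_{0})$ would be a nonzero torsion sheaf on $X$ sitting inside the torsion-free $h^{0}\FM_{\kp_X}(T)$; impossible. Hence $T$ is pure one-dimensional. Next, suppose $\supp T$ has an irreducible component $Y$ with $p(Y)=\IP^{1}$. For generic $y\in\IP^{1}$, base change along $\iota_{y}:C_{y}\hookrightarrow X$ gives $\iota_{y}^{*}G=\FM_\kp(F|_{C_{y}})[1]=L_{y}$, a line bundle of degree $2$ on $C_{y}$ (since $F|_{C_{y}}$ is a stable rank-$2$ degree-$-1$ bundle, hence WIT$_{1}$), while $T|_{C_{y}}$ is a nonzero zero-dimensional sheaf. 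The long exact sequence of $\iota_{y}^{*}$ applied to $0\to T\to G\to G/T\to 0$ then forces $T|_{C_{y}}$ to inject into $\Tor_{1}(G/T,\ko_{C_{y}})$ (a torsion sheaf cannot map nontrivially into a line bundle). But $G/T$ is torsion-free of rank one on a smooth surface, hence locally free in codimension one, so $\Tor_{1}(G/T,\ko_{C_{y}})=0$ for generic $y$. Contradiction.

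The hardest case, and the true main obstacle, is when $T$ is supported on a fiber $C_{y_{0}}$ of $p$, say $T=\iota_{y_{0}*}T'$. Because $T$ is annihilated by the local equation of $C_{y_{0}}$, one has $0\neq T\subseteq \Tor_{1}(G,\ko_{C_{y_{0}}})$, and by base change this Tor sheaf equals $h^{0}(\FM_\kp(F|_{C_{y_{0}}}))$. The latter is nonzero precisely when $F|_{C_{y_{0}}}$ admits a sub-line-bundle $L_{1}$ of degree $\geq 1$ (degree-zero line bundles on an elliptic curve being WIT$_{1}$). I rule this out by stability: setting $L_{2}:=F|_{C_{y_{0}}}/L_{1}$ with $\deg L_{2}\leq -2$, form the elementary transformation $K:=\ker(F\twoheadrightarrow\iota_{y_{0}*}L_{2})$. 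A direct Grothendieck--Riemann--Roch computation gives $c_{1}(K)=-f_{q}-3f_{p}$ and $c_{2}(K)=(3+\deg L_{2})z$, so the Bogomolov discriminant is
\[ \Delta(K)\;=\;4c_{2}(K)-c_{1}(K)^{2}\;=\;(6+4\deg L_{2})z\;\leq\;-2z, \]
violating the Bogomolov--Gieseker inequality. Hence $K$ is not $\mu_{H}$-semistable, so it contains a saturated rank-one subsheaf $M\subset K\subset F$ with $\mu_{H}(M)>\mu_{H}(K)=-3$. On the other hand, the $\mu_{H}$-stability of $F$ forces $\mu_{H}(M)<\mu_{H}(F)=-5/2$, and the open interval $(-3,-5/2)$ contains no integer. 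This contradiction shows $T=0$, so $G$ is torsion-free.
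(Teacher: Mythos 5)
Your proposal is correct in substance and follows the same overall strategy as the paper: use $\FM_{\kp_X}\FM_{\kp_X}=\iota^*[-1]$ to turn a torsion subsheaf $T\subset G=h^1(\FM_{\kp_X}(F))$ into a constraint on the torsion-free stable bundle $F$, split according to the dimension of $\supp(T)$ and its position relative to $p$, and in the decisive fiber-supported case produce a sub-line-bundle of $F|_{C_{y_0}}$ of degree $\geq 1$, perform the elementary transformation and invoke Bogomolov's inequality. Two of your local choices genuinely differ from the paper's and are arguably cleaner. In the ``horizontal'' case the paper transforms $T$ itself and argues as in Lemma \ref{exa-sheaf} with the rank-one image of $\iota^*\FM_{\kp_X}(T)\to F$, whereas you restrict $0\to T\to G\to G/T\to 0$ to a generic fiber and use $\Tor_1(G/T,\ko_{C_y})=0$; and at the very end the paper extracts the two Bogomolov destabilising inequalities and solves them for $n_p,n_q$, whereas you simply observe that a maximal destabilising subsheaf $M\subset K\subset F$ would need the integer $\mu_H(M)=c_1(M)\cdot H$ to lie in $(-3,-5/2)$, which is empty. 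Your identification $\Tor_1(G,\ko_{C_{y_0}})=h^0(\FM_\kp(F|_{C_{y_0}}))$ also replaces the paper's three-way split on $\deg_Y(T)$ by a single statement, a nice simplification.

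Two points need patching, both routine. First, in the horizontal case you assert that $F|_{C_y}$ is stable for generic $y$, hence that $\iota_y^*G$ is a line bundle; this is exactly what makes ``a zero-dimensional sheaf cannot map nontrivially into $\iota_y^*G$'' work, since for a merely $\mathrm{WIT}_1$ restriction the transform $h^1\FM_\kp(F|_{C_y})$ can have torsion (e.g.\ if $F|_{C_y}$ contained a degree-zero line bundle). Generic stability of the restriction is true here but not automatic --- the fibers of $p$ are not multiples of $H$ --- and requires a short argument: a sub-line-bundle of degree $\geq 0$ on the generic fiber spreads out to a saturated rank-one subsheaf $M\subset F$ with $c_1(M)\cdot f_p\geq 0$, and the stability bound $c_1(M)\cdot H\leq -3$ together with the Whitney formula $2=c_2(F)=c_1(M)\cdot c_1(F/M)+c_2(M)+c_2(F/M)$ yields a contradiction. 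Second, in the fiber-supported case $T$ need not be annihilated by the local equation $t$ of $C_{y_0}$ (it may be non-reduced in the $\IP^1$-direction); replace $T$ by the nonzero subsheaf $\ker(t\colon T\to T)$ before concluding $\Tor_1(G,\ko_{C_{y_0}})\neq 0$, and take $L_1$ saturated so that $L_2$ is genuinely a line bundle.
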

\begin{proof}
We consider the torsion subsheaf $T(h^1(\FM_{\kp_X}(F)))$. It
contains a subsheaf $T$ which is of rank one on its support and
with $\supp(T)$ is irreducible. If the support of $T$ is zero-dimensional, 
then the morphism
$T[-1] \to \FM_{\kp_X}(F)$ defines via the Fourier-Mukai transform a
morphism from the torsion sheaf $\iota^*\FM_{\kp_X}(T)$ to $F$ which is
impossible because $F$ is torsion free. 

Thus, we may assume that $Y=\supp(T)$ is of dimension one. If the
induced morphism $p|_Y:Y \to \IP^1$
dominates $\IP^1$, then the the restriction of $\FM_{\kp_X}(T)$ to the
fibers of $p$ is a semistable vector bundle of degree zero. Hence we
obtain a morphism $\iota^*\FM_{\kp_X}(T) \to F$ which has image of rank
one and conclude like in the proof of Lemma \ref{exa-sheaf}.

Thus, we have to discuss only the case when $Y$ is a fiber of $p$, and
may assume that $T$ is torsion free on $Y$. We distinguish three cases
depending on the degree $\deg_Y(T)$ of $T$ on $Y$. In all these cases we
investigate the resulting morphism $\iota^*\FM_{\kp_X}(T) \to F$.

Case $\deg_Y(T) > 0$: Here $\iota^*\FM_{\kp_X}(T)$ is a torsion sheaf on
$Y$ which is impossible because $F$ is torsion free.

Case $\deg_Y(T) =0$: Here $\iota^*\FM_{\kp_X}(T) = k(y)[-1]$ for a point
$y \in Y$. However, for a vector bundle $F$ we have
$\Hom_{\Db(X)}(k(y)[-1],F) = \Ext^1(k(y),F)=H^0(\shExt^1(k(y),F))=0$.

Case $\deg_Y(T) < 0$: The Fourier-Mukai transform $\iota^*\FM_{\kp_X}(T)$
is a stable vector bundle $E_T$ on $Y$ of degree one shifted by $[-1]$.
The existence of a nontrivial homomorphism $\iota^*\FM_{\kp_X}(T) \to F$ is
equivalent to $\Hom(E_T,F|_Y) \ne 0$. 
The image of the morphism $E_T \to F|_Y$ is a line bundle on $Y$ of 
positive degree. Thus, we have
a surjection $F|_Y \to L$ where $L$ is a line bundle on $Y$ of degree $d
\leq -2$.
Denoting the kernel of the composition morphism
$F \to F|_Y \to L$ by $F'$, we obtain a vector bundle with invariants
\[ \rk(F')=2, \quad c_1(F')=-f_q-3f_p, \quad c_2(F')=3+d, \quad
   \Delta(F')=c_1^2(F')-4c_2(F') = -4d-6\,.\]
By Bogomolov's inequality \cite[Thm.\ 3.4.1]{hl}, $F'$ is 
Bogomolov unstable.
Hence there exists a destabilising exact sequence
\[0 \to M \to F' \to \det(F')\otimes M\inv \otimes \kj_Z \to 0\]
with $Z$ of dimension zero, $(2c_1(M)-\det(F')).H >0$, and
$(2c_1(M)-\det(F'))^2>0$.
Writing $c_1(M)=n_pf_p+n_qf_q$, we obtain the two inequalities
$2n_p+6n_q+6 >0$, and $(2n_p+3)(2n_q+1)>0$.
The first implies that not both factors in the second can be negative.
Thus, $n_p \geq -1$ and $n_q \geq 0$. Now $M$ is a subsheaf of $F$, too.
Thus, we have $(2c_1(M)-c_1(F)).H \leq 0$ which reads 
$2n_p+3n_q+5 \leq 0$, and is impossible.
\end{proof}

Putting together our results we have obtained the

\begin{localcorollary}
The Fourier-Mukai transform identifies the open subset $U$ of $\km_1$
which parametrises twisted ideal sheaves of two points in different fibers of
$p:X \to \IP^1$ with the moduli space $\km_2$.
\end{localcorollary}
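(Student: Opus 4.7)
The strategy is to construct mutually inverse morphisms $\Phi: U \to \km_2$ and $\Psi: \km_2 \to U$ using the relative Fourier-Mukai transform. The forward map $\Phi(E) := \FM_{\kp_X}(E)$ is essentially the content of Case 1 above: for $E \cong q^*L\otimes\kj_Z$ with the two points of $Z$ in distinct fibers of $p$, the sheaf $\FM_{\kp_X}(E)$ was shown to be a stable rank-two vector bundle with $c_1 = -f_q-2f_p$ and $c_2 = 2z$, i.e.\ an object of $\km_2$.

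For the inverse, let $F \in \km_2$. The preceding two lemmas guarantee that $\FM_{\kp_X}(F) = G[-1]$ for a torsion-free sheaf $G = h^1(\FM_{\kp_X}(F))$. Define $\Psi(F) := \iota^*G$. A direct Chern-character computation, using the cohomological Fourier-Mukai formula for $\FM_{\kp_X}$ and the fact that $\iota^*$ acts trivially on $H^\ast(X,\IQ)$ in the relevant degrees, shows that $\ch(\iota^*G) = 1+2f_q-2z$. Hence $\iota^*G$ is a rank-one torsion-free sheaf of the shape $q^*L\otimes\kj_Z$ for some degree-two line bundle $L$ on $C$ and a length-two subscheme $Z\subset X$, i.e.\ an object of $\km_1$.

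It remains to verify that $\Phi$ and $\Psi$ are mutual inverses and that $\Psi$ lands in $U$. The identity $\FM_{\kp_X}^2 \simeq \iota^*[-1]$ used in the proof of Lemma \ref{exa-sheaf}, together with the commutation $\iota^*\FM_{\kp_X} \simeq \FM_{\kp_X}\iota^*$ coming from the $\iota\times\iota$-symmetry of $\kp$, gives $\FM_{\kp_X}(\iota^*G) = \iota^*\FM_{\kp_X}(G) = \iota^*(\iota^*F) = F$, so $\Phi\circ\Psi = \id_{\km_2}$; the reverse composition $\Psi\circ\Phi = \id_U$ is seen by the same formal manipulation applied to a sheaf $E\in U$. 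To see that $\Psi(F) \in U$ (not just in $\km_1$), suppose for contradiction that the subscheme $Z$ associated to $\Psi(F)$ were contained in a single fiber of $p$. Then Case 2 of the preceding analysis would apply to $\Psi(F)$, forcing $\FM_{\kp_X}(\Psi(F))$ to have nonzero $h^0$; but $\FM_{\kp_X}(\Psi(F)) = F$ is a sheaf, contradiction.

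The only remaining issue is to promote these pointwise constructions to morphisms of schemes. Because $\FM_{\kp_X}(F)$ is concentrated in a single cohomological degree for all $F\in\km_2$ (by Lemma \ref{exa-sheaf}), and likewise $\FM_{\kp_X}(E)$ is a sheaf for all $E\in U$ (by Case 1), the relative Fourier-Mukai transforms applied to universal families are flat and commute with base change; thus $\Phi$ and $\Psi$ are genuine morphisms of moduli spaces. The principal obstacle in the plan is the last step of the previous paragraph: ruling out the Case-2 locus on the $\Psi$-side. Fortunately this is forced by the very concentration statement of Lemma \ref{exa-sheaf}, so the argument is self-contained.
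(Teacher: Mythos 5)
Your proof is correct and follows exactly the route the paper intends: its own ``proof'' is just the phrase ``putting together our results'', meaning precisely the assembly you carry out of Case 1 (the forward map), Lemma \ref{exa-sheaf} plus the torsion-freeness lemma and $\FM_{\kp_X}\circ\FM_{\kp_X}\cong\iota^*[-1]$ (the inverse map), and the Case-1/Case-2 dichotomy (to see that the inverse lands in $U$). One small slip to fix: when ruling out the Case-2 locus you should invoke the nonvanishing of $h^1(\FM_{\kp_X}(\Psi(F)))$ --- the length-one cokernel of $E_L\to\FM_{\kp_X}(\ko_Z)$ produced in Case 2 --- rather than of $h^0$, since a nonzero $h^0$ is perfectly compatible with $F$ being a sheaf; the contradiction with $F$ being concentrated in degree zero comes from the presence of a second cohomology sheaf (equivalently, from $c_2(h^0)=z\neq 2z$).
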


\noindent
{\bf Two compactifications of the moduli space $\km_2$.}
At this point it seems natural to compactify $\km_2$ by adding the objects
$\FM_{\kp_X}(E)$ with $[E] \in \km_1 \setminus U$. Thus way we have an
isomorphism
$\FM_{\kp_X}: \km_1 \to \overline{\km}_2$. Since the dimensions of
$\Ext^i(a,a)$ are invariant under $\FM_{\kp_X}$, we obtain a smooth
moduli space $\overline{\km}_2$ from the smooth moduli space $\km_1$. We are
compactifying with simple objects having two cohomology sheaves. 

The classical construction of moduli spaces compactifies $\km_2$ with
coherent sheaves $E$ with one singular point. That is, the morphism $E
\to E^{\lor \lor}$ has cokernel of length one.
The Fourier-Mukai transform $\FM_{\kp_X}$
of these objects does not yield torsion free sheaves. This can be seen
be applying $\FM_{\kp_X}$ to the distinguished triangle containing the
morphism $E  \to E^{\lor \lor}$.
We hope this illustrates that the compactification $\overline{\km}_2$
by derived objects is natural and important.

\section{Sheaf conditions for objects in $\Db(X)$} \label{sheafconditions}
{\bf Notation:}
Let $X$ be a projective variety of dimension $n$
with a very ample polarisation $\ko_X(1)$.
For an object $a \in \Db(X)$, we denote the $i$th cohomology of the
complex $a$ by $a^i:=h^i(a)$. The object $a$ can be represented by a
sheaf concentrated in zero if and only if
$a^i=0$ for all integers $i \ne 0$.
Abbreviating, we call such an object $a$ a sheaf in $\Db(X)$.
For an object $a \in \Db(X)$,
we define the cohomology group $H^i(a(k))$ to be the vector space
$\Hom_{\Db(X)}(\ko_X(-k)[-i],a)$.

To compute the cohomology groups $H^i(a)$, we use the 
(local $\Rightarrow$ global) cohomology spectral sequence
of \cite[p.\ 263]{GM}:
\[ E^{pq}_2 = \Ext^q(\ko_X(-k),a^{-p}) =H^q(a^{-p}(k))
\Rightarrow H^{p+q}(a(k)) \,. \]

If the dimension of $X$ is zero this spectral sequence degenerates and
we conclude the

\begin{locallemma}\label{sc-0}
  If $X$ is of dimension zero, then $a \in \Db(X)$ is a sheaf if and
only if $H^i(a)=0$ for all $i \ne 0$. \qed
\end{locallemma}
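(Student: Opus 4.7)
The plan is to specialise the spectral sequence displayed just above the lemma,
\[ E_2^{pq} = H^q(a^{-p}(k)) \Rightarrow H^{p+q}(a(k)), \]
to $k=0$ and exploit the fact that on a zero-dimensional variety every coherent sheaf is acyclic: $H^q(\kf) = 0$ for $q > 0$ and any $\kf \in \Coh(X)$. This forces the $E_2$-page to be concentrated in the row $q = 0$, so the spectral sequence collapses at $E_2$ and yields canonical identifications $H^p(a) \cong \Gamma(X, a^{-p})$ for every $p \in \IZ$.

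The ``only if'' direction is then immediate: if $a$ is a sheaf, then $a^{-p} = 0$ for $p \ne 0$, and hence $H^p(a) = 0$ for such $p$. For the ``if'' direction, the hypothesis together with the displayed identification gives $\Gamma(X, a^{-p}) = 0$ for every $p \ne 0$. Since $X$ is zero-dimensional and projective, it is in fact affine, and the global sections functor $\Gamma(X,-)$ is a faithful exact functor on $\Coh(X)$; in particular a coherent sheaf with vanishing global sections is itself zero. We conclude $a^{-p} = 0$ for all $p \ne 0$, which is precisely the assertion that $a$ is a sheaf.

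There is essentially no obstacle: the entire content is the trivial collapse of the spectral sequence to its bottom row, forced by $\dim X = 0$, combined with the affineness of $X$. The only fiddly point is to confirm that the paper's convention $H^i(a) := \Hom_{\Db(X)}(\ko_X[-i], a)$ (i.e.\ the $k=0$ case) is the one that matches $E_2^{p,0} = \Gamma(X, a^{-p})$ under the spectral sequence, which is immediate from the definition.
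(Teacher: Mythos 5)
Your argument is exactly the one the paper intends: the lemma is stated with \qed precisely because the (local $\Rightarrow$ global) spectral sequence displayed just above it degenerates in dimension zero, identifying $H^i(a)$ with the global sections of the corresponding cohomology sheaf, and faithfulness of $\Gamma$ on a zero-dimensional scheme finishes both directions. Your write-up just makes explicit what the paper leaves to the reader, so it is correct and takes the same route.
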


\begin{locallemma}\label{sc-1}
Let $X$ be of dimension one, $p: \IZ \to \IZ$ be a polynomial
 of degree less than two.
  If $a \in \Db(X)$ is an object satisfying
\[ \begin{array}{ll}
(i) & H^i(a(k))= 0  \mbox{ for all pairs } (i,k)
\mbox{ with } k\in \{ -1, 0, p(-1) \} \,, \mbox{ and } i\ne0\,,\\
(ii)& \dim(H^0(a(k)))=p(k) \mbox{ for  } k\in \{-1,0\}\,,
\end{array}\]
then $a$ is a sheaf of Hilbert polynomial $p$.  
\end{locallemma}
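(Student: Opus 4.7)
The plan is to exploit that $X$ being a curve makes the spectral sequence from \S\ref{sheafconditions} extremely rigid, and then to combine the three vanishings with a self-referential numerical inequality at $k=p(-1)$ that forces $a$ into degree zero.

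First I would extract the Hilbert polynomial: at $k=-1$ and $k=0$, hypotheses (i) and (ii) give $\chi(a(k))=h^0(a(k))=p(k)$. Since $\chi(a(k))$ is a polynomial in $k$ of degree at most $\dim X=1$ and agrees with $p$ at two points, $\chi(a(k))=p(k)$ as polynomials on $\IZ$. In particular $p(-1)=h^0(a(-1))\geq 0$, and at $k=p(-1)$ one then obtains $h^0(a(p(-1)))=p(p(-1))$.

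Next, on a curve the spectral sequence is essentially trivial: only sheaf-cohomology degrees $0$ and $1$ can be nonzero, so no nontrivial differentials are possible and $H^n(a(k))$ has graded pieces $H^0(a^n(k))$ and $H^1(a^{n-1}(k))$. Substituting into (i), at each of the three values $k\in\{-1,0,p(-1)\}$ one has $h^0(a^i(k))=0$ for every $i\neq 0$ and $h^1(a^i(k))=0$ for every $i\neq -1$. For every $i\notin\{0,-1\}$ the polynomial $\chi(a^i(k))$ (of degree $\leq 1$) then vanishes at three points, hence is identically zero; a coherent sheaf on a curve with vanishing Hilbert polynomial has rank zero and length zero, so $a^i=0$. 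Only $a^0$ and $a^{-1}$ can remain.

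The main obstacle is to rule out $a^{-1}\neq 0$, since at $i=-1$ only the $h^0$-vanishing is given. Because torsion contributes sections invariant under twist, $h^0(a^{-1}(-1))=0$ excludes torsion, making $a^{-1}$ a vector bundle of some rank $r\geq 0$. To rule out $r\geq 1$ I would combine $h^0(a^{-1}(p(-1)))=0$, which gives $\chi(a^{-1})\leq -rh\,p(-1)$ (writing $h:=\deg\ko_X(1)\geq 1$), with $h^1(a^0(-1))=0$, which gives $\chi(a^0)\geq r_0 h$ for $r_0:=\rk(a^0)$, and substitute both into
\[
p(-1) \;=\; \chi(a^0(-1))-\chi(a^{-1}(-1)) \;=\; (r-r_0)h + \chi(a^0) - \chi(a^{-1})
\]
to obtain $p(-1)\geq rh(1+p(-1))$. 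Since $rh\geq 1$ and $p(-1)\geq 0$, this yields the contradiction $p(-1)\geq 1+p(-1)$. Hence $a^{-1}=0$, so $a$ is a sheaf, and the first step identifies its Hilbert polynomial as $p$. The entire content of the lemma thus lives in this last self-referential inequality, and that is precisely why the third twist is chosen to be $p(-1)$: it manufactures the decisive factor $1+p(-1)$ on the right-hand side.
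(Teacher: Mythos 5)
Your numerical endgame is correct and genuinely different from the paper's proof: the paper establishes injectivity of $H^0(a(-1))\to H^0(a)$ via the Euler triangle (Lemma~\ref{lemma-2}), restricts to a general divisor to reduce to the zero-dimensional Lemma~\ref{sc-0}, and finishes with the truncation triangle, whereas you split $a$ into its cohomology sheaves and let Euler characteristics do the work, with the twist by $p(-1)$ producing the factor $1+p(-1)$. Granting the decomposition $H^n(a(k))\cong H^0(a^n(k))\oplus H^1(a^{n-1}(k))$, your steps check out: $\chi(a(k))=p(k)$ from two interpolation points, $a^i=0$ for $i\notin\{0,-1\}$, torsion-freeness of $a^{-1}$, and the contradiction $p(-1)\ge rh(1+p(-1))\ge 1+p(-1)$ for $r\ge1$.

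The gap is in the justification of that decomposition. A spectral sequence concentrated in the two rows $q=0,1$ does \emph{not} degenerate at $E_2$ for lack of room: the differential $d_2$ maps the $q=1$ row to the $q=0$ row two columns over (in standard indexing, $H^1(a^{n-2}(k))\to H^0(a^{n}(k))$), and only $d_r$ for $r\ge3$ vanish automatically. So $H^n(a(k))=0$ yields only that this $d_2$ surjects onto $H^0(a^n(k))$ and is injective on $H^1(a^{n-1}(k))$ --- not the vanishing of the $E_2$-terms themselves, which is exactly what your argument consumes. The decomposition you want \emph{is} true on a smooth curve, but for a different reason: $\Ext^{\ge2}$ between coherent sheaves vanishes there, so every object of $\Db(X)$ splits as $\bigoplus_i a^i[-i]$ (the paper invokes this in the proof of Proposition~\ref{curPdatum2}); substituting that splitting for your degeneration claim repairs the proof, and also covers the step where you pass from torsion-free to locally free. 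Be aware, however, that the lemma as stated allows a singular projective curve, where both the splitting and ``torsion-free $\Rightarrow$ locally free'' can fail; in that generality you would need to work with the two-row exact sequence directly, or follow the paper's route, which sidesteps the issue by reducing to a general zero-dimensional divisor.
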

\proof
Put $m:=p(-1)$. By choosing a two-dimensional vector subspace 
$V\subset H^0(\ko_X(1))$ such that $V \otimes \ko_X \to \ko_X(1)$ is 
surjective, we get a morphism $V \otimes H^0(a(-1)) \to H^0(a)$. The 
object $S:=S^{m-1}(V,\ko_X,\ko_X(1))$ obtained in Construction \ref{Sm} 
has, by Lemma \ref{lemma-2}, the following property:
the resulting morphism $\varrho_v:H^0(a(-1)) \to H^0(a)$ is injective 
for a general $v \in V$ if and only if $\Hom(S,a[-1])=0$.
As noted in Remark \ref{Smlinebundle}, $S$ is a vector bundle of 
rank one and determinant $\ko_X(-m)$, i.e.\ $S\cong\ko_X(-m)$.
Thus, by assumption (i) we get 
 $0=\Hom(S,a[-1])=H\inv(a(m))$. 
Now consider a general divisor $H$ in the linear system $\IP(V\dual)$ 
giving a distinguished triangle
\[ a(-1) \rarpa{v} a \to a \otimes \ko_H \to a(-1)[1] \,.\]
By general divisor we mean: That the morphism $H^0(a(-1)) \to H^0(a)$ is
injective and that $- \otimes \ko_H$ commutes with the cohomology of
the complex, i.e. $h^i(a \otimes \ko_H)=h^i(a) \otimes \ko_H$.
We derive that $a \otimes \ko_H$ fulfills the assumption of Lemma
\ref{sc-0} and is a sheaf. Thus for $i \ne 0$ the cohomology sheaves
$a^i$ are skyscraper sheaves concentrated in points outside $H$. Hence,
the above spectral sequence fulfills $E^{pq}_2 =0$ unless $p$ or $q$ are
zero. Since $H^i(a)=0$ for all $i<0$, we have $a^i=0$ for all $i<0$.
We consider now the distinguished triangle corresponding to the natural
t-structure
\[ \tau_{\leq 0}a \to a \to \tau_{ \geq 1}a \to \tau_{\leq 0}a[1] \,.\]
We have that $\tau_{\leq 0}a$ is a sheaf, which implies
$H^i(\tau_{\leq 0}a)=0$ for
$i \not\in\{0,1\}$. By assumption (i) we have $H^i(a)=0$ for all
$i\ne 0$, and from the (local $\Rightarrow$ global) spectral
sequence we deduce
that $H^i(\tau_{ \geq 1}a)=0$ for $i \leq 0$,
because $h^i(\tau_{ \geq 1}a)=0$ for $i \leq 0$.
From the long exact
cohomology sequence of this triangle we conclude $H^i(\tau_{ \geq 1}a)=0$
for all integers $i$. Since $\tau_{ \geq 1}a$ is a zero-dimensional
complex this implies $\tau_{ \geq 1}a =0$.
\qed

\begin{locallemma}\label{sc-2}
Let $X$ be of dimension two,
$V \subset H^0(\ko_X(1))$ a subspace such that the evaluation morphism
$V \otimes \ko_X \to \ko_X(1)$ is surjective, and
$p: \IZ \to \IZ$ be a polynomial
of degree less than three. Its derivative is the polynomial
$p'(k):=p(k)-p(k-1)$. Put $m:=(\dim(V)-1)(p(0)-1)$.
Any object $a \in \Db(X)$ with
\[ \begin{array}{ll}
(i)    & H^i(a(k))= 0  \mbox{ for all pairs } (i,k) \mbox{ with } 
         k\in \{ -2,-1, 0 \} \,, \mbox{ and } i\ne0\,,\\
(ii)   & \dim(H^0(a(k)))=p(k) \mbox{ for  } k\in \{-2,-1,0\}\,,\\
(iii_1)& \Hom_{\Db(X)}(S^m(V,\ko_X,\ko_X(1)),a[i]) =0 
         \mbox{ for all } i\ne0\,,\\
(iii_2)& \Hom_{\Db(X)}(S^m(V,\ko_X,\ko_X(1))\otimes\ko_X(1),a[i]) =0 
         \mbox{ for all } i\ne0\,,\\
(iii_3)& \Hom_{\Db(X)}(S^m(V,\ko_X,\ko_X(1))\otimes\ko_X(-p'(-1)),a[i])=0
         \mbox{ for all } i\ne0\,,\\
(iii_4)& \Hom_{\Db(X)}(\ko_X(-p'(-1)),a[i])=0 \mbox{ for all } 
         i\ne0\,,\mbox{ and}\\
(iii_5)& \Hom_{\Db(X)}(\ko_X(1-p'(-1)),a[i]) =0 \mbox{ for all } i\ne0 \\
\end{array}\]
is a sheaf of Hilbert polynomial $p$.  
\end{locallemma}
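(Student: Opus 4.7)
The plan is to reduce to the dimension-one case by restricting $a$ to a generic hyperplane section $H\in|V|$ and then invoking Lemma~\ref{sc-1}. I would pick $v\in V$ sufficiently general so that (a) $H=H_v$ is a smooth curve on $X$ avoiding all associated points of the cohomology sheaves of $a$ (hence $-\otimes\ko_H$ commutes with $h^i$ on $a$), and (b) Lemma~\ref{lemma-2}, applied to the three twists of $S=S^m(V,\ko_X,\ko_X(1))$ appearing in $(iii_1),(iii_2),(iii_3)$, yields injectivity of the multiplication maps $\varrho_v\colon H^0(a(k-1))\to H^0(a(k))$ for $k\in\{-1,0,p'(-1)\}$, where $p'(k):=p(k)-p(k-1)$.

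For each such $k$ the distinguished triangle
\[
a(k-1)\xrightarrow{\,v\,}a(k)\to(a\otimes\ko_H)(k)\to a(k-1)[1]
\]
and its long exact cohomology sequence, combined with (i), (ii), the injectivities just obtained, and the additional vanishings $(iii_4),(iii_5)$ at $k=p'(-1)-1$ and $k=p'(-1)$, verify that $a\otimes\ko_H$ satisfies the hypotheses of Lemma~\ref{sc-1} on the curve $H$ with polynomial $p'$ (of degree less than two, as $\deg p<3$). Therefore $a\otimes\ko_H$ is a coherent sheaf on $H$ with Hilbert polynomial $p'$.

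Since $-\otimes\ko_H$ is exact on the cohomology of $a$, we get $a^i\otimes\ko_H=h^i(a\otimes\ko_H)=0$ for $i\ne0$, so each such $a^i$ has support disjoint from the ample divisor $H$ and is therefore zero-dimensional. In the local-to-global spectral sequence $E_2^{p,q}=H^q(a^{-p}(k))\Rightarrow H^{p+q}(a(k))$ evaluated at $k=0$, the rows $-p\ne0$ are concentrated at $q=0$ with $E_2^{p,0}=\ell(a^{-p})$; outgoing differentials from these entries vanish, and the only potentially nonzero incoming differentials originate from the $a^0$-row and can only land in positions $p=r\in\{2,3\}$. Thus for $p\notin\{0,2,3\}$ we obtain $E_\infty^{p,0}=\ell(a^{-p})$, and comparison with $H^p(a(0))=0$ from (i) forces $a^i=0$ for every $i>0$ and for $i=-1$.

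The remaining $a^{-j}$ with $j\ge2$ are killed by exploiting that $X$ is smooth and two-dimensional, whence $\Ext^{>2}$ vanishes between coherent sheaves. A short hyper-Ext spectral sequence then gives $\Ext^1(a^0,\tau_{\le-2}a)=0$, so the truncation triangle $\tau_{\le-2}a\to a\to a^0\to\tau_{\le-2}a[1]$ splits and $a\cong a^0\oplus\tau_{\le-2}a$. Since the cohomology sheaves of $\tau_{\le-2}a$ are zero-dimensional, its spectral sequence degenerates on $q=0$ and delivers $H^j(\tau_{\le-2}a(k))=\ell(a^{-j})$ for $j\ge2$. Therefore $H^j(a(k))=H^j(a^0(k))+\ell(a^{-j})$; the automatic vanishing $H^{>2}=0$ on a surface kills $\ell(a^{-j})$ for $j\ge3$, and $H^2(a(0))=0$ from (i) forces $H^2(a^0(0))=\ell(a^{-2})=0$. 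Hence $a=a^0$ is a sheaf whose Hilbert polynomial has degree at most two and agrees with $p$ at $k=-2,-1,0$ by (ii), so it equals $p$. The main technical hurdle is this final splitting-plus-cancellation step, because the preliminary spectral sequence argument does not itself handle the positions $p=2,3$ — precisely where the smoothness of the surface enters in an essential way.
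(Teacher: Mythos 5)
Your reduction to the curve case coincides with the paper's proof: a general $v\in V$, the restriction triangle $a(k-1)\to a(k)\to a(k)\otimes\ko_{H_v}$, Lemma \ref{lemma-2} applied to the three twists of $S^m(V,\ko_X,\ko_X(1))$ from $(iii_1)$--$(iii_3)$ to get injectivity of $H^0(a(k-1))\to H^0(a(k))$ at $k\in\{-1,0,p'(-1)\}$, and $(iii_4)$, $(iii_5)$ to control the outer terms at $k=p'(-1)$; this makes $a\otimes\ko_{H_v}$ a sheaf on $H_v$ with polynomial $p'$ by Lemma \ref{sc-1}, forces the off-degree cohomology sheaves of $a$ to be punctual, and then the hypercohomology spectral sequence kills all of them except the two that can receive differentials from $H^1(a^0(k))$ and $H^2(a^0(k))$. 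Up to here you and the paper agree. The endgame is genuinely different. The paper proves $H^2(a^0(k))=0$ for $k\ge -1$ using Mumford $0$-regularity of $a^0\otimes\ko_{H_v}$, then identifies the cohomology of the punctual truncation with $H^2(a^0(k))$ via the truncation triangle and uses twist-invariance of the cohomology of a punctual complex to conclude that this truncation vanishes. You instead split the truncation triangle: the connecting morphism lives in a group assembled from $\Ext^3(a^0,a^{-2})$ and $\Ext^4(a^0,a^{-3})$, which vanish on a smooth projective surface, so $a\cong a^0\oplus\tau_{\le -2}a$ and the lengths of the two remaining punctual sheaves are read off directly from $H^2(a(0))=H^3(a(0))=0$. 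This is slicker and avoids regularity, but note two points. First, it uses smoothness of $X$ (for $\Ext^{>2}=0$ between coherent sheaves, e.g.\ via Serre duality), whereas the standing hypothesis of this section is only that $X$ is a projective variety and the paper's regularity argument survives without smoothness; since the lemma is only ever applied to smooth surfaces in Theorem \ref{surface}, this costs nothing in practice, but it does narrow the statement. Second, the phrase ``automatic vanishing $H^{>2}=0$ on a surface'' is not a valid reason that the length of the degree $-3$ cohomology sheaf dies: for a complex, $H^3(a(0))$ is not a priori zero, and what actually finishes the step is condition (i) at $(i,k)=(3,0)$ --- which is available --- so the justification is misattributed though the conclusion is correct.
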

\proof
For $v \in V$ we have short a exact sequence
 $0 \to \ko_X(-1) \rarpa{v} \ko_X \to \ko_{H_v} \to 0$. 
We obtain long exact cohomology sequences 
\[
 H\inv(a(k) \otimes \ko_{H_v}) \to H^0(a(k-1)) \larpa{H(v)}
 H^0(a(k)) \to H^0(a(k) \otimes \ko_{H_v}) \to H^1(a(k-1)) .
\]
We will restrict ourselves to those $v \in V$ which are general in the
sense that they commute with forming cohomology, that is
\[ h^i(a \otimes \ko_{H_v}) = h^i(a) \otimes \ko_{H_v} 
\qquad \mbox{and} \qquad a \otimes \ko_{H_v} =
a \stackrel{{\rm L}}{\otimes} \ko_{H_v} \,.\]
By assumption (i) and (iii$_4$) the left-most terms are zero for
$k \in \{-1,0,p'(-1) \}$. For these values of $k$ the right-most
terms vanish by assumptions (i) and (iii$_5$).
We deduce from Lemma \ref{lemma-2} and (iii$_3$) the injectivity of $H(v)$
for $k=0$ and $v \in V$ general. Analogously it follows from (iii$_4$)
and (iii$_5$) that for a general $v$ the morphism $H(v)$ is injective
for $k=-1$ and $k=p'(-1)$. Thus, for a general $v \in V$ the tensor
product $a \otimes \ko_{H_v}$ fulfills the requirements (i) and (ii)
of Lemma \ref{sc-1} with $p$ replaced by $p'$.
From now on we suppose that $v \in V$ is general in the above sense.
Therefore $a \otimes \ko_{H_v}$ is a sheaf concentrated on $H_v$.
As in the proof of Lemma \ref{sc-1}, we obtain that for $i\ne 0$
the cohomology sheaves $a^i$ have supports disjoint from the ample
divisor $H_v$. Thus, they are supported in closed points
disjoint from $H_v$.
As before, the (local
$\Rightarrow$ global) spectral sequence yields $a^i=0$ for all $i <0$.
By Mumford's regularity criterion (see chapter 14 in
\cite{Mum-CuOnSf}) $a^0 \otimes \ko_{H_v}$ is 0-regular
which gives the vanishing $H^i(a^0 \otimes \ko_{H_v}(k))$ for all $k
\geq -1$ and $i>0$.
Thus, from the long exact sequence we obtain $H^2(a^0(k-1)) \cong
H^2(a^0(k))$ for all $k \geq 0$. Therefore we conclude $H^2(a^0(k))=0$
for all $k \geq -1$.
As in the proof of Lemma \ref{sc-1} we consider the filtration triangle 
\[ \tau_{\leq 0}a \to a \to \tau_{ \geq 1}a \to \tau_{\leq 0}a[1] \,,\]
and remark that $\tau_{\leq 0}a = a^0$ is a sheaf.
Since $H^i(\tau_{\leq 0}a(k))=0$ for all $i >2$ we deduce from the
associated long exact sequence $H^1(\tau_{ \geq 1}a(k)) \cong
H^2(\tau_{\leq 0}a(k))$ and $H^i(\tau_{ \geq 1}a(k))=0$ for $i \ne 1$.
Since $H^2(\tau_{\leq 0}a(k))$ vanishes for $k \geq -1$ and
$H^1(\tau_{ \geq 1}a(k))$ is (not canonically) twist invariant,
we deduce that
$H^1(\tau_{ \geq 1}a(k))=0$. This implies $\tau_{ \geq 1}a=0$.
\qed

\begin{localtheorem}\label{sc}
Let $X$ be a projective variety of dimension $n\leq 2$
and $p: \IZ \to \IZ$ be a polynomial of degree at most $n$.
There exists a sheaf
$b \in \Db(X)$ such that any object $a \in \Db(X)$ with
\[ \begin{array}{ll}
(i) & H^i(a(k))= 0  \mbox{ for all pairs } (i,k)
\mbox{ with } k\in \{-n, \ldots, 0 \} \,, \mbox{ and } i\ne0\,,\\
(ii)& \dim(H^0(a(k)))=p(k) \mbox{ for  }
k\in \{-n,\ldots ,0\}\,, \mbox{
and}\\
(iii)&\Hom_{\Db(X)}(b,a[i]) =0  \mbox{ for all } i\ne0\\
\end{array}\]
is a sheaf of Hilbert polynomial $p$.
Furthermore, any sheaf $a\in\Db(X)$ satisfying
conditions (i) and (ii) fulfills (iii).
\end{localtheorem}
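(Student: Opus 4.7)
The plan is to prove the theorem case by case in dimension $n \in \{0,1,2\}$, in each case constructing $b$ as a direct sum of auxiliary objects appearing in Lemmas \ref{sc-0}, \ref{sc-1} and \ref{sc-2}, so that condition (iii) of the theorem encodes exactly those hypotheses of the respective lemma which go beyond (i) and (ii). Once $b$ has been chosen, the forward implication reduces immediately to the corresponding lemma.

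First I would handle $n=0$ by taking $b:=0$, so that (iii) is vacuous and Lemma \ref{sc-0} together with (i) gives the claim. For $n=1$, the only hypothesis of Lemma \ref{sc-1} not covered by (i) and (ii) is the vanishing $H^i(a(p(-1)))=0$ for $i\neq 0$, so setting $b:=\ko_X(-p(-1))$ identifies $\Hom_{\Db(X)}(b,a[i])$ with $H^i(a(p(-1)))$, and Lemma \ref{sc-1} applies. For $n=2$, I would choose $V\subseteq H^0(\ko_X(1))$ whose evaluation generates $\ko_X(1)$, set $m:=(\dim V-1)(p(0)-1)$ and $p'(k):=p(k)-p(k-1)$, and (abbreviating $S := S^m(V,\ko_X,\ko_X(1))$) take
\[
b := S \oplus \bigl(S\otimes\ko_X(1)\bigr) \oplus \bigl(S\otimes\ko_X(-p'(-1))\bigr) \oplus \ko_X(-p'(-1)) \oplus \ko_X(1-p'(-1)).
\]
Because Hom out of a direct sum splits, condition (iii) then simultaneously encodes all five conditions $(iii_1)$--$(iii_5)$ of Lemma \ref{sc-2}, and that lemma yields the conclusion.

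For the \emph{furthermore} statement, I would verify (iii) by Castelnuovo--Mumford regularity applied to the sheaf $a$ of Hilbert polynomial $p$ satisfying (i) and (ii). In dimension $0$ there is nothing to check. In dimension $1$, (i) forces $H^1(a(-1))=0$, so $a$ is $0$-regular, hence $H^1(a(k))=0$ for all $k\geq -1$, and in particular at $k=p(-1)\geq 0$ (non-negative because $p(-1)=h^0(a(-1))$). In dimension $2$, (i) makes $a$ at worst $(-1)$-regular, giving $H^i(a(k))=0$ for $i>0$ and $k\geq -1$; combined with the iterated-extension description of $S^m(V,\ko_X,\ko_X(1))$ from Construction \ref{Sm} (whose pieces are copies of $\ko_X$ and $\ko_X(-1)$), this rewrites each Hom-vanishing in $(iii_1)$--$(iii_5)$ as higher cohomology of a twist of $a$ by some $\ko_X(k)$ with $k\geq -1$, which vanishes.

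The hardest part will be this last step: tracing through the resolution of $S^m(V,\ko_X,\ko_X(1))$ and checking that none of the twists appearing there falls below the regularity threshold, so that the $\Hom$-vanishings really follow from (i). Everything else is a direct assembly of the three preceding dimension-by-dimension lemmas.
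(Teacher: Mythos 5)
Your choice of $b$ in each dimension and the reduction of the forward implication to Lemmas \ref{sc-0}, \ref{sc-1} and \ref{sc-2} coincide exactly with the paper's proof, and your treatment of the ``furthermore'' clause in dimensions $0$ and $1$ (via $0$-regularity and $p(-1)\geq 0$) is also the paper's argument. The gap is in the step you yourself flag as the hardest: the verification of (iii$_1$)--(iii$_3$) for a sheaf $a$ in dimension two. These vanishings do \emph{not} reduce to ``higher cohomology of a twist of $a$ by some $\ko_X(k)$ with $k\geq-1$''. From the defining triangle $S^m\to\Sym^{m+1}(V)\otimes\ko_X\to\Sym^m(V)\otimes\ko_X(1)\to S^m[1]$ (note the pieces are $\ko_X$ and $\ko_X(1)$, not $\ko_X(-1)$), the long exact sequence gives for each $i$ an extension
\[
0\to\coker\bigl[\Sym^m(V\dual)\otimes H^{i}(a(-1))\to\Sym^{m+1}(V\dual)\otimes H^{i}(a)\bigr]\to\Hom^{i}(S^m,a)\to\ker\bigl[\Sym^m(V\dual)\otimes H^{i+1}(a(-1))\to\Sym^{m+1}(V\dual)\otimes H^{i+1}(a)\bigr]\to 0.
\]
For $i\geq 1$ this indeed vanishes by (i) and regularity, as you say. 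But for $i=-1$ one gets
\[
\Hom^{-1}(S^m,a)\cong\ker\bigl[\Sym^m(V\dual)\otimes H^{0}(a(-1))\to\Sym^{m+1}(V\dual)\otimes H^{0}(a)\bigr],
\]
and the groups $H^0(a(-1))$, $H^0(a)$ are nonzero, so no cohomology vanishing can kill this term. Its vanishing is, by Lemmas \ref{lemma-1} and \ref{lemma-2} (and this is precisely why the bound $m\geq(\dim V-1)(p(0)-1)$ appears), equivalent to the injectivity of the multiplication map $\cdot v\colon H^0(a(k-1))\to H^0(a(k))$ for a general $v\in V$. For a sheaf $a$ this injectivity holds because a general divisor in $|V|$ avoids the finitely many associated points of $a$, so that $a(-1)\xrightarrow{v}a$ is injective as a sheaf map --- a geometric input about general hyperplane sections, not a consequence of Castelnuovo--Mumford regularity (regularity controls surjectivity of multiplication maps, not injectivity). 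The paper closes exactly this point by invoking Lemma \ref{lemma-2} for (iii$_1$)--(iii$_3$) and reserving the regularity argument for (iii$_4$) and (iii$_5$); as written, your plan for this step would fail.
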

\proof For dimension of $X$ equal to zero we can set $b=0$ and are done
by Lemma \ref{sc-0}.

In case $\dim(X)=1$ we set $b=\ko_X(-p(-1))$.
Lemma \ref{sc-1} tells us that conditions (i)--(iii) force $a$ to be
a sheaf.  To see that a sheaf
$a \in \Db(X)$ which satisfies (i) also satisfies (iii), we
remark that (i) implies the 0-regularity of the sheaf $a$. Thus, 
condition (iii) holds because $p(-1)$ being the dimension of a vector
space can not be negative.

If $\dim(X)=2$, then we set $b=b_1 \oplus b_2 \oplus b_3 \oplus b_4
\oplus b_5$ with $b_i$ the sheaf of condition (iii$_i$) of Lemma
\ref{sc-2}. (For example: $b_2=S^m(V,\ko_X,\ko_X(1))\otimes\ko_X(1)$.)
Again Lemma \ref{sc-2} tells us that conditions (i)--(iii) for $a \in
\Db(X)$ imply that $a$ is a sheaf. Suppose now that $a$ is a sheaf
fulfilling (i) and (ii). Since for a general $v \in V$ the resulting
morphisms $H^0(v):H^0(a(k-1)) \to H^0(a)$ are injective we obtain by
Lemma \ref{lemma-2} that conditions (iii$_1$), (iii$_2$), and (iii$_3$)
of Lemma \ref{sc-2} hold. Again the the Mumford-Castelnuovo regularity
of $a$ yields that conditions (iii$_4$), and (iii$_5$) hold, too.
\qed

\begin{localremarks}
(1) Considered as an element of the Grothendieck group $K(X)$ the sheaf $b$
of the above theorem
is in the subgroup spanned by the elements $\ko_X(k)$ with $k=0,
\ldots,n$. Thus, for an object $a$ which fulfills the conditions of the
theorem the dimension of $\Hom_{\Db(X)}(b,a)$ is given.

(2) The object $b$ depends on the Hilbert polynomial $p$ of $a$.
This can be seen best in Lemma \ref{sc-1}.

(3) If $a \in \Db(X)$ is a sheaf with Hilbert polynomial $p$,
then conditions (i) and (ii) of Theorem \ref{sc} do in general
not hold. However, after  a suitable twist these conditions hold.
\end{localremarks}

\section{The Euler triangle} \label{lemmasection}

\begin{locallemma}\label{lemma-1}
Let $U$ and $W$ be $k$-vector spaces of finite dimension.
Suppose that the morphism $\xymatrix{ U \otimes \ko_{\IP^n}
\ar[r]^-\rho & W \otimes \ko_{\IP^n}(1)}$ on $\IP^n$
is not injective. 
Then for any integer
$m \geq (\dim(U)-1)n$ we have $H^0(\ker(\rho)(m)) \ne 0$.
\end{locallemma}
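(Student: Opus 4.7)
The plan is to reduce the statement to an explicit syzygy construction via Cramer's rule. Since $K=\ker(\rho)$ fits in the left-exact sequence
\[ 0 \to K \to U \otimes \ko_{\IP^n} \to W \otimes \ko_{\IP^n}(1), \]
taking global sections after twisting identifies $H^0(K(m))$ with the kernel of the induced linear map $U \otimes R_m \to W \otimes R_{m+1}$, where $R_d := H^0(\ko_{\IP^n}(d))$. Thus it suffices to exhibit a nonzero homogeneous polynomial vector $v \in U \otimes R_m$ with $\rho(v)=0$.

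Choose a basis of $U$ and view $\rho$ as a $w \times u$ matrix $M$ of linear forms on $\IP^n$, where $u=\dim U$ and $w=\dim W$. The assumption that $\rho$ fails to be injective as a sheaf map is exactly that $M$ has generic rank $r \leq u-1$. If $r=0$ then $M=0$ and any basis vector of $U$ gives a constant syzygy, so we may assume $r \geq 1$. After permuting rows and columns, we may further assume that the top-left $r \times r$ submatrix $A$ of $M$ is nonsingular, so that $\det A$ is a nonzero homogeneous form of degree $r$.

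Let $b$ denote the first $r$ entries of the $(r+1)$-st column of $M$. Define $v \in R^u$ by
\[ v_{r+1} := \det A, \quad (v_1,\dots,v_r)^T := -\mathrm{adj}(A)\cdot b, \quad v_j := 0 \text{ for } j > r+1. \]
Every entry of $v$ is homogeneous of degree $r$, and $v \neq 0$. We claim $Mv=0$ as a polynomial identity in $R^w$. For the first $r$ rows this is immediate from $A\cdot\mathrm{adj}(A)=\det(A)\cdot I_r$. The remaining $w-r$ rows are the one substantive point: for any $p \in \IP^n$ with $\det A(p)\neq 0$, the first $r$ rows of $M(p)$ are linearly independent, and since the rank of $M(p)$ cannot exceed the generic rank $r$, they span the row space of $M(p)$. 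Hence every lower row is a $k$-linear combination of them at $p$, forcing $(Mv)_i(p)=0$. Since $\{\det A \neq 0\}$ is dense in $\IP^n$ and $(Mv)_i$ is a polynomial, it vanishes identically.

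Thus $v$ is a nonzero element of $H^0(K(r))$, and for any $m \geq r$ multiplication by a nonzero form of degree $m-r$ yields a nonzero section of $K(m)$. Since $r \leq u-1 \leq (u-1)n$ for $n \geq 1$, and the $n=0$ case is immediate from $K=\ker(\rho) \neq 0$ being a nonzero subspace of $U$, the conclusion $H^0(\ker(\rho)(m)) \neq 0$ for all $m \geq (\dim(U)-1)n$ follows.
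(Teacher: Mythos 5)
Your proof is correct, but it proceeds along a genuinely different route from the paper. The paper never exhibits an explicit syzygy: it splits $\rho$ through its image, uses the left-exact sequences to get $h^0(\ker(\rho)(m)) \geq h^0(U\otimes\ko_{\IP^n}(m)) - h^0(\im(\rho)(m))$, bounds $h^0(\im(\rho)(m))$ above by $(\dim(U)-1)\binom{n+1+m}{n}$ (in the case $\dim W\geq\dim U$ this requires choosing a subspace $W'\subset W$ mapping injectively to the cokernel), and then checks that $\dim(U)\binom{n+m}{n}$ exceeds this bound exactly when $m\geq(\dim(U)-1)n$ --- a pure Hilbert-function count, which is where the threshold $(\dim(U)-1)n$ originates. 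You instead produce a concrete nonzero element of $H^0(\ker(\rho)(r))$ by Cramer's rule, using lower semicontinuity of rank to kill the rows below the nonsingular $r\times r$ block; all the steps there check out (the identification $H^0(K(m))=\ker(U\otimes R_m\to W\otimes R_{m+1})$, the degree count $\deg(\mathrm{adj}(A)\cdot b)=r$, the density argument, and the multiplication by a form of degree $m-r$). Your approach is more elementary and in fact proves a stronger statement: a nonzero section already exists in degree $r\leq\dim(U)-1$, independent of $n$, so the lemma's hypothesis $m\geq(\dim(U)-1)n$ is far from sharp. What the paper's counting argument buys in exchange is that it avoids any choice of bases and trivializations and stays entirely at the level of cohomological dimension estimates, which is the style in which the lemma is then consumed by Lemma \ref{lemma-2}; but either proof supports everything the paper builds on it.
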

\proof
From the morphism $\rho$ we obtain two short exact sequences
\[0 \to \ker(\rho) \to U \otimes \ko_{\IP^n}
\to \im(\rho) \to 0,
\qquad
0 \to \im(\rho) \to W \otimes \ko_{\IP^n}(1) 
\to \coker(\rho) \to 0\,.
\]
The resulting long cohomology sequences yield two inequalities for
all integers $k$
\[\begin{array}{rcl}
h^0(\ker(\rho)(k)) & \geq &
h^0(U \otimes \ko_{\IP^n}(k)) -h^0(\im(\rho)(k))\\
h^0(\im(\rho)(k)) & \leq & h^0(W \otimes \ko_{\IP^n}(1)) \,.
\end{array}\]
First we assume that $\dim(W) \leq \dim(U)-1$. This implies 
$h^0(\im(\rho)(m)) \leq (\dim(U)-1)\binom{n+1+m}{n}$.
Since $h^0(U \otimes \ko_{\IP^n}(m))=\dim(U)\binom{n+m}{n}$,
this yields 
$h^0(U \otimes \ko_{\IP^n}(m)) > h^0(\im(\rho)(m)) $ for all $m \geq
(\dim(U)-1)n$. Thus, we obtain $h^0(\ker(\rho)(m)) >0$
for $m \geq (\dim(U)-1)n$.

Now we assume that $\dim(W) \geq \dim(U)$. The cokernel $\coker(\rho)$
has rank at least $\dim(W)-\dim(U)+1$. Therefore there exists a subspace 
$W' \subset W$ of dimension $\dim(W)-\dim(U)+1$ such that the resulting
morphism $W' \otimes \ko_{\IP^n}(1) \to \coker(\rho)$ is injective in the
generic point, and eventually injective. Thus, the image of
the injective morphism
$H^0(\im(\rho)(k)) \to H^0(W \otimes \ko_{\IP^n}(k+1))$ is
transversal to $H^0(W' \otimes \ko_{\IP^n}(k+1))$. This implies 
$h^0(\im(\rho)(m)) \leq (\dim(U)-1)\binom{n+1+m}{n}$ as before.
\qed

\begin{localconstruction}\label{Sm}
The Euler triangle and objects $S^m(V,a,b)$.
\end{localconstruction}
For any two objects $a,b$ of a $k$-linear triangulated category $\kt$ and some
subspace $V\subset\Hom(a,b)$ of finite dimension we get a distinguished (Euler)
triangle
\[ S^m(V,a,b) \to \Sym^{m+1}(V)\otimes a \map{\theta} \Sym^m(V)\otimes b 
              \to S^m(V,a,b)[1] \]
where tensor products of vector spaces and objects are just finite direct sums, 
and $\theta$ is induced by the natural map
\[ \Sym^{m+1}(V)\to\Sym^m(V)\otimes\Hom(a,b), \quad
   f_0\vee\dots\vee f_m \mapsto 
  \sum_i (f_0\vee\cdots\hat{f_i}\cdots\vee f_m)\otimes f_i . \]

\begin{localremark}
In the special case where $\kt=\Db(\IP^n_k)$ is the bounded derived
category of the projective space $\IP^n_k$ over $k$ and $a=\ko_{\IP^n}$,
$b=\ko_{\IP^n}(1)$, $V=\Hom(a,b)=H^0(\ko_{\IP^n}(1))$ and $m=0$, the above
triangle is induced by the classical Euler sequence
\[ 0 \to \Omega_{\IP^n}(1) \to \ko_{\IP^n}^{\oplus n+1} \to \ko_{\IP^n}(1) \to 0.\]
\end{localremark}

\begin{localremark}
For any $c\in\kt$, the triangle defining $S^m(V,a,b)$ yields a long exact sequence 
\[\xymatrix@C=1.4em
{  \Hom^{k-1}(b,c)\otimes\Sym^m(V\dual)\ar[r]
 & \Hom^{k-1}(a,c)\otimes\Sym^{m+1}(V\dual) \ar[r]
 & \Hom^{k}(S^m(V,a,b),c) \ar[dll] 
\\
   \Hom^{k}(b,c)\otimes\Sym^m(V\dual)\ar[r]
 & \Hom^{k}(a,c)\otimes\Sym^{m+1}(V\dual) \ar[r]
 & \Hom^{k+1}(S^m(V,a,b),c).
} \]
\end{localremark}

\begin{locallemma}\label{lemma-2}
Let $\kt$ be a triangulated $k$-linear category with finite-dimensional Hom's,
$a,b,c\in\kt$ objects with $\Hom^{-1}(a,c)=0$ and let $V\subset\Hom(a,b)$ be 
a subspace. Then the following conditions are equivalent:
\begin{abcliste}
\item[(i)]  The morphism $\varrho_v:\Hom(b,c)\to\Hom(a,c)$ is injective for
            $v \in V$ general.
\item[(ii)] $\Hom^{0}(S^m(V,a,b),c)=0$ holds for some
            $m\geq(\dim(V)-1)(\hom(b,c)-1)$.
\end{abcliste}
\end{locallemma}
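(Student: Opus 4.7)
The plan is to apply $\Hom_{\kt}(-,c)$ to the Euler triangle defining $S^m(V,a,b)$ and read off the claim from the long exact sequence displayed just before the lemma. At $k=0$, using the vanishing $\Hom^{-1}(a,c)=0$, we obtain the left-exact sequence
\[ 0 \to \Hom^0(S^m(V,a,b),c) \to \Hom(b,c) \otimes \Sym^m(V\dual) \map{\theta^*} \Hom(a,c) \otimes \Sym^{m+1}(V\dual). \]
Viewing $\Sym^k(V\dual)$ as the space of homogeneous polynomial functions of degree $k$ on $V$, the middle term parametrises polynomial maps $P \colon V \to \Hom(b,c)$ of degree $m$, and unwinding the definition of $\theta$ shows that $\theta^*$ acts by $P \mapsto [v \mapsto \varrho_v(P(v)) = P(v) \circ v]$. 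Thus $\Hom^0(S^m(V,a,b),c)$ is identified with the space of polynomials $P$ of degree $m$ such that $P(v) \in \ker(\varrho_v)$ for every $v \in V$.

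From this identification the direction (i) \kimplies (ii) is immediate and requires no condition on $m$: if $\varrho_v$ is injective on a dense open $U \subset V$, then any such $P$ vanishes on $U$ and hence everywhere, so $\Hom^0(S^m(V,a,b),c)=0$ for every $m \geq 0$.

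For the converse I argue by contrapositive. Assume $\varrho_v$ is not injective for general $v$; since injectivity is a Zariski-open condition on the irreducible variety $V$, this forces $\ker(\varrho_v) \neq 0$ for every $v \in V$. Globalise to the projective space $\IP^n := \IP(V)$, $n = \dim V - 1$: the pairing $\varrho$ induces a morphism of vector bundles
\[ \tilde\varrho \colon \Hom(b,c) \otimes \ko_{\IP^n} \to \Hom(a,c) \otimes \ko_{\IP^n}(1) \]
whose fibre at $[v]$ is $\varrho_v$ up to rescaling, and the fibrewise non-injectivity makes $\tilde\varrho$ non-injective as a sheaf map. Applying Lemma \ref{lemma-1} with $U = \Hom(b,c)$ and $W = \Hom(a,c)$ yields $H^0(\ker(\tilde\varrho)(m)) \neq 0$ whenever $m \geq (\hom(b,c)-1)(\dim V - 1)$. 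Such a global section is a non-zero element of $\Hom(b,c) \otimes H^0(\ko_{\IP^n}(m)) = \Hom(b,c) \otimes \Sym^m(V\dual)$ lying in $\ker(\theta^*) = \Hom^0(S^m(V,a,b),c)$, contradicting (ii).

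The only real technical point is to match the purely algebraic map $\theta^*$ --- ``substitute then compose'' --- with the sheaf morphism $\tilde\varrho$ on global sections; once this identification is in place, Lemma \ref{lemma-1} supplies the explicit bound on $m$ recorded in (ii) essentially for free.
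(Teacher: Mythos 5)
Your proof is correct and follows essentially the same route as the paper: identify $\Hom^0(S^m(V,a,b),c)$ with $\ker(\theta^*)$ via the long exact sequence and the hypothesis $\Hom^{-1}(a,c)=0$, globalise $\varrho_v$ to a bundle map on projective space, and invoke Lemma \ref{lemma-1} for the quantitative bound on $m$. Your extra observation that (i) $\Rightarrow$ (ii) holds for every $m\geq 0$ via the polynomial interpretation is a nice refinement, but the argument is the same in substance.
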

\proof
We consider the morphism $\Hom(b,c)\to V\dual\otimes\Hom(a,c)$.
Together with the natural surjection 
 $V\dual\otimes\ko_{\IP(V\dual)}\to\ko_{\IP(V\dual)}(1)$,
this gives a morphism
\[ \varrho: \Hom(b,c) \otimes \ko_{\IP(V\dual)} \to 
            \Hom(a,c) \otimes \ko_{\IP(V\dual)}(1) \quad
   \mbox{ on} \quad \IP(V\dual) \,.\]
The injectivity of $\varrho$ is equivalent to the injectivity of the
maps $\varrho_v:\Hom(b,c)\to\Hom(a,c)$ for generic (or just one)
$v \in V$.
By Lemma \ref{lemma-1} this is equivalent to the injectivity of
\[ H^0( \varrho \otimes \ko_{\IP(V\dual)}(m)) :
   H^0(\Hom(b,c) \otimes \ko_{\IP(V\dual)}(m)) \to
   H^0(\Hom(a,c) \otimes \ko_{\IP(V\dual)}(m+1)) \]
for $m = (\dim(V)-1)(\hom(b,c)-1)$.
Since $\Hom^{-1}(a,c)=0$, the long exact cohomology sequence of the
triangle from Construction \ref{Sm} gives that the kernel of 
$H^0(\varrho\otimes\ko_{\IP(V\dual)}(m))$ is $\Hom^{0}(S^m(V,a,b),c)$.
\qed

\begin{localcorollary} \label{Smlinebundle}
Suppose that $L$ is a base point free line bundle on a smooth projective
variety $X$ and $V\subset H^0(L)$ is a subspace such that
$V\otimes\ko_X\to L$ is surjective. Then the object $S^m(V,\ko_X,L)$ is
a vector bundle with invariants
\[ \rk(S^m(V,\ko_X,L))=\genfrac{(}{)}{0pt}{1}{m+\dim(V)-1}{m+1}, \qquad
   \det(S^m(V,\ko_X,L))=L^{-\otimes\binom{m+\dim(V)-1}{m}} .\]
\end{localcorollary}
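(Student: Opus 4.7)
The plan is to show that, under the base-point-freeness hypothesis, the morphism $\theta:\Sym^{m+1}(V)\otimes\ko_X\to\Sym^m(V)\otimes L$ from Construction \ref{Sm} is surjective as a morphism of coherent sheaves. Granting this, the defining Euler triangle collapses to the short exact sequence
\[ 0\to S^m(V,\ko_X,L)\to\Sym^{m+1}(V)\otimes\ko_X\map{\theta}\Sym^m(V)\otimes L\to 0, \]
and $S^m(V,\ko_X,L)\cong\ker(\theta)$ is locally free as the kernel of a surjection of vector bundles on a smooth variety.

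To check surjectivity of $\theta$ I would argue fibrewise. Pick $x\in X$; by hypothesis the evaluation $\alpha_x:V\to L_x$ is nonzero, so we choose $v_0\in V$ with $\alpha_x(v_0)\ne 0$ and extend to a basis $v_0,v_1,\ldots,v_n$ of $V$ with $v_1,\ldots,v_n\in\ker(\alpha_x)$. Trivializing $L_x$ via $\alpha_x(v_0)$ and identifying $\Sym^\bullet V\cong k[v_0,\ldots,v_n]$, the explicit formula $\theta(f_0\vee\cdots\vee f_m)=\sum_i(f_0\vee\cdots\widehat{f_i}\cdots\vee f_m)\otimes f_i$ collapses at $x$ to the partial derivative
\[ \theta_x:k[v_0,\ldots,v_n]_{m+1}\to k[v_0,\ldots,v_n]_m,\qquad v_0^{a_0}v_1^{a_1}\cdots v_n^{a_n}\mapsto a_0\,v_0^{a_0-1}v_1^{a_1}\cdots v_n^{a_n}, \]
which is manifestly surjective in characteristic zero.

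With the short exact sequence in hand, the numerical invariants are bookkeeping. Using $\dim\Sym^k(V)=\binom{k+\dim(V)-1}{k}$ and Pascal's identity,
\[ \rk(S^m)=\binom{m+\dim(V)}{m+1}-\binom{m+\dim(V)-1}{m}=\binom{m+\dim(V)-1}{m+1}; \]
and since $\Sym^{m+1}(V)\otimes\ko_X$ is a free $\ko_X$-module with trivial determinant, the short exact sequence also gives
\[ \det(S^m)=\det(\Sym^m(V)\otimes L)^{-1}=L^{-\otimes\binom{m+\dim(V)-1}{m}}. \]
The only substantive obstacle is the pointwise surjectivity of $\theta$: choosing the basis adapted to $\ker(\alpha_x)$ reduces $\theta_x$ to the partial derivative $\partial_{v_0}$, after which everything is immediate.
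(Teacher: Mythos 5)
The paper states this corollary without proof, and your argument is precisely the one it leaves implicit: base-point-freeness makes $\theta:\Sym^{m+1}(V)\otimes\ko_X\to\Sym^m(V)\otimes L$ surjective fibrewise (your adapted basis reduces $\theta_x$ to $\partial_{v_0}$ on degree-$(m+1)$ forms), the Euler triangle then collapses to a short exact sequence identifying $S^m(V,\ko_X,L)$ with $\ker(\theta)$, and the rank and determinant follow from Pascal's identity and multiplicativity of $\det$. This is correct. Your parenthetical ``in characteristic zero'' is a genuine restriction rather than a formality: in characteristic $p$ the degree-$m$ monomial $v_0^{p-1}v_1^{m-p+1}$ has $v_0^{p}v_1^{m-p+1}$ as its only potential preimage, and $\theta_x$ sends that to $p\cdot v_0^{p-1}v_1^{m-p+1}=0$, so $\theta$ fails to be surjective as soon as $m\ge p-1$ and $S^m(V,\ko_X,L)$ then acquires a nonzero first cohomology sheaf $\coker(\theta)$. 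The corollary as printed carries no characteristic hypothesis, so you have in fact isolated a hidden assumption ($\chara(k)=0$, or $m\le p-2$) that the paper does not make explicit; apart from that caveat your proof is complete.
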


\begin{localremarknumbered} \label{cor-2}
For two vector bundles $A$ and $B$ on a smooth projective curve $X$ such
that the canonical map $\Hom(A,B)\otimes A\to B$ is surjective, we set
\[ a=B\dual[1], \quad b=A\dual[1], \quad V=\Hom(a,b)=\Hom(A,B) .\]
Then $S^m(V,a,b)$ is a locally free sheaf concentrated in degree $1$,
because the maps $\Sym^{m+1}(V)\otimes B\dual\to\Sym^m(V)\otimes A\dual$
are surjective for all $m\geq0$.

For any sheaf $c=E\in\Coh(X)$, the condition $\Hom^{-1}(a,c)=0$ is
equivalent to $H^0(E\otimes B)=0$. Setting
 $F_{r,d}:=S^m(V,a,b)\otimes\omega_X[1]$ and using Serre duality, we see
 that statement (ii) in Lemma \ref{lemma-2} is equivalent to
 $\Hom(E,F_{r,d})=0$.
\end{localremarknumbered}

\begin{localremark}
Concerning the functoriality of the objects $S^m(V,a,b)$, let us first 
introduce the relevant category $\kc^m(\kt)$: its objects are triples
$(V,a,b)$ consisting of two objects $a,b\in\kt$ and a subspace
$V\subset\Hom(a,b)$. A morphism $(V,a,b)\to(V',a',b')$ in $\kc^m(\kt)$
is given by two maps
 $\alpha : \Sym^{m+1}(V)\otimes a \to \Sym^{m+1}(V')\otimes a'$ and
 $\beta : \Sym^m(V)\otimes b \to \Sym^m(V)\otimes b'$
such that the following diagram commutes:
\[ \xymatrix{
 \Sym^{m+1}(V)\otimes a \ar[r]^\theta \ar[d]_{\alpha} &
  \Sym^m(V)\otimes b \ar[d]^{\beta} \\
 \Sym^{m+1}(V')\otimes a' \ar[r]^{\theta'} & \Sym^m(V')\otimes b'
} \]
\noindent
This way, $\kc^m(\kt)$ is a $k$-linear category. With $S^m(V,a,b)=\Cone(\theta)$
and $S^m(V',a',b')=\Cone(\theta')$ in the above diagram, the usual nuisance of
non-functoriality of cones in triangulated categories prevents $S^m$
from being a functor $\kc^m(\kt)\to\kt$.

This problem is related to defining the spherical twist functors, and
we can follow the approach of Seidel and Thomas \cite{Seidel-Thomas} in the 
geometric case, $\kt=\Db(X)$. They use the fact that the homotopy category of 
bounded below complexes of (quasi-coherent) injectives with bounded coherent 
homology is equivalent to the bounded derived category of coherent sheaves. 
The $S^m$-construction works just as well for the homotopy category of 
injectives, and taking cones becomes functorial then, because the morphisms 
are genuine complex maps. See \cite[\S2a,b]{Seidel-Thomas} for details.

Thus, we obtain functors $S^m:\kc^m(\Db(X))\to\Db(X)$ for all $m\in\IN$. 
Note that even if we set the subspace $V$ to be the full homomorphism 
space and fix either $a$ or $b$, the resulting functor $\Db(X)\to\Db(X)$
cannot be triangulated, except in the case $m=0$.

\end{localremark}


\end{document}